\newtheorem{theorem}{Theorem}[section]
\newtheorem{lemma}[theorem]{Lemma}
\newtheorem{corollary}[theorem]{Corollary}
\newtheorem{proposition}[theorem]{Proposition}
\theoremstyle{definition}
\newtheorem{definition}[theorem]{Definition}
\newtheorem{example}[theorem]{Example}
\theoremstyle{remark}
\newtheorem{remark}[theorem]{Remark}
\numberwithin{equation}{section}
\begin{document}
\title{Pseudo-Orbits, Stationary Measures and Metastability}
\author{Wael Bahsoun}
    \address{Department of Mathematical Sciences, Loughborough University, 
Loughborough, Leicestershire, LE11 3TU, UK}
\email{W.Bahsoun@lboro.ac.uk}
\author{Huyi Hu}
\address{Mathematics Department, Michigan State University, East Lansing, MI 48824, USA}
\email{hu@math.msu.edu}
\author{Sandro Vaienti}
\address{
UMR-6207 Centre de Physique Th\'{e}orique, CNRS, Universit\'{e}
d'Aix-Marseille I, II, Universit\'{e} du Sud, Toulon-Var and FRUMAM,
F\'{e}d\'{e}ration de Recherche des Unit\'{e}s des Math\'{e}matiques de Marseille,
CPT Luminy, Case 907, F-13288 Marseille CEDEX 9}
\email{vaienti@cpt.univ-mrs.fr}
\subjclass{Primary 37A05, 37E05}
\thanks{H. Hu thanks the University of Toulon and CPT Marseille where parts of this work was conducted. WB would like to thank the London Mathematical Society for supporting his visit to CPT Marseille where parts of this research was conducted. SV thanks the French ANR Perturbations,  the CNRS-PEPS ``Mathematical Methods for Climate Models'' and the CNRS Pics ``Propri\'et\'es statistiques des syst\'emes d\'eterministes et al\'eatoires" N. 05968, for support. SV thanks Ph. Marie for useful discussions, and D. Faranda for his help in producing Figure 3.}

\date{\today}
\keywords{Expanding maps, Random perturbations, Pseudo orbits, Metastability.}

%
\begin{abstract} We characterize absolutely continuous stationary measures (acsms) of randomly perturbed dynamical systems in terms of pseudo-orbits linking the ergodic components of absolutely continuous invariant measures (acims) of the unperturbed system. We focus on those components, called least-elements , which attract pseudo orbits. Under the assumption that the transfer operators of both systems, the random and the unperturbed, satisfy a uniform Lasota-Yorke inequality on a suitable Banach space, we show that each least element is in a one-to-one correspondence with an ergodic acsm of the random system.
\end{abstract}
\maketitle
\section{Introduction}
 In this paper we study statistical aspects of random perturbations under the assumption that the transfer operators of the random and the unperturbed systems satisfy a uniform Lasota-Yorke inequality on a suitable Banach space (see subsection \ref{Banach}) \footnote{See \cite {HV} and \cite{Li} for recent results and for an exhaustive list of references on deterministic expanding maps satisfying such inequality.}. Let $T:M\to M$, $M\subset \mathbb{R}^q$. A  random orbit $\{x_n^{\varepsilon}\}_n$  is a random process where, for all $n$, $x_{n+1}^{\varepsilon}$ is a random variable whose possible values are obtained in an $\varepsilon$-neighbourhood of $Tx_n^{\varepsilon}$ according to a transition probability ${P}_{\varepsilon}(x_n,.)$. We consider the case where ${P}_{\varepsilon}(x_n,.)$ is absolutely continuous with respect to Lebesgue measure $m$. Denoting the density of the transition probability by $p_{\varepsilon}(x,.)$, we define a perturbed transfer operator , $\mathcal L_{\varepsilon}$, by:
$$
\mathcal{L}_{\varepsilon}f(x)=\int_M p_{\varepsilon}(y,x)f(y)dy
$$
where $f\in L_m^1(M)$. We focus on non-invertible dynamical systems whose transfer operators, perturbed and non-perturbed, satisfy a uniform Lasota-Yorke inequality.  In this paper, the non-perturbed operator
$$\mathcal L f(x)=\sum_{y\in T^{-1}x}\frac{f(y)}{|D_yT)|}$$ is the traditional transfer operator (Perron-Frobenius) associated with the map $T$ \cite{Ba}. Among other things, the Lasota-Yorke inequality implies the existence of a finite number of ergodic acims for the initial system $T$, and a finite number of ergodic acsms for the random system \cite{Ba}.

\bigskip

We then define an equivalence relation between the ergodic components of acims of $T$ using \textit{pseudo-orbits}. Using this equivalence relation, we consequently introduce equivalence classes of ergodic acims. Among the latter, we identify those which attract pseudo-orbits and call them \textit{least elements}. We show that each least element admits a neighbourhood which supports exactly one ergodic acsm of the random system, and the converse is also true, namely the support of any ergodic acsm contains only one least element. This result allows us to identify the {\em attractors} of the random orbits, namely the least elements, and we give a nice illustration in the example 2 in Section 5. Moreover, we use our result to identify random perturbations that exhibit a \textit{metastable behavior.}  Such a phenomenon has recently been a very active topic of research in both ergodic theory \cite{BV1, BV2, DW, FS, GHW, KL2} and applied dynamical systems \cite{DF, SFM}.

\bigskip

Section \ref{main} contains the setup of the problem, our assumptions, the notion of a least element and the statement of our main result (Theorem \ref{MT1}). Section \ref{pf} contains the proof of Theorem \ref{MT1}. In Section \ref{meta} we use the results of the previous sections to identify random systems which exhibit a metastable behavior. In Section \ref{comp} we apply our results to random transformations, in particular, we provide  examples to illustrate the results of Sections \ref{main} and \ref{meta}.
\section{Setup and statement of the main result}\label{main}
\subsection{The initial system}
Let $M\subset \mathbb{R}^q$ be compact\footnote{All our results can be carried to the case where $M$ is a compact Riemannian manifold.} with $\overline{int M}=M$.  We denote by $d$ the Euclidean distance on $ \mathbb{R}^q$. Let $(M, \mathcal{A}, m)$ be the measure space, where $\mathcal A$ is Borel $\sigma$-algebra, and $m$ is the normalized Lebesgue measure on $M$; i.e, $m(M)=1$.\\
 Let $T:M\rightarrow M$ be a measurable map, ${\mathcal D_T}$ be the set of discontinuities of $T$; with the notation ${\mathcal D_g}$ we mean the set of discontinuities of the function $g$. We assume that $m ({\mathcal D_T})=0$, and $T$ is non-singular with respect to $m$. The transfer operator (Perron-Frobenius) \cite{Ba} associated with $T$, $\mathcal{L}:L^1_m\hookrightarrow  L^1_m$ is defined by duality:  for $g_1\in L^1_m$ and $g_2\in L^{\infty}_m$
 $$\int_M g_1 g_2\circ T dm= \int_M \mathcal{L} g_1 g_2 dm.$$
\subsection{The perturbed system}
We perturb the map $T$ by introducing a family of Markov chains $(\mathcal{X}^{\varepsilon}_n)$, $\varepsilon>0, n\ge 0$  with state space $M$ and  transition probabilities $\{P_{\varepsilon}(x,.)\}_{x\in M}$; i.e, $P_{\varepsilon}(x,A)$ is the probability that a point $x$ is mapped into a measurable set $A$. At time $n=0$, $(\mathcal{X}^{\varepsilon}_0)$ can have any probability distribution. We assume that:
\begin{enumerate}
\item[(P1)] For all $x\in M$, $P_{\varepsilon}(x,.)$ is absolutely continuous with respect to the Lebesgue measure. We will denote its density by $p_{\varepsilon}(x,.)$. Therefore
$P_{\varepsilon}(\mathcal{X}^{\varepsilon}_{n+1}\in A|\mathcal{X}^{\varepsilon}_n=x)=\int_A p_{\varepsilon}(x,y)dm(y)$.
\item[(P2)] We have:  supp$P_{\varepsilon}(x,.)= B_{\varepsilon}(Tx)$ for all $x$.
\end{enumerate}
Assumption (P2) can be weakened by supposing that supp$P_{\varepsilon}(x,.)$ is a subset of a slightly larger ball around $Tx$. Our proofs can be easily adapted to show that the results of this paper will still hold under such a slightly weakened assumption.
\subsubsection{Random orbits and stationary measures}
The perturbed evolution of a state $x\in M$ will be represented by a \textit{random orbit}:
\begin{definition}
A sequence $\{x_n^{\varepsilon}\}_{n\ge0}\subset M$ is an \textit{$\varepsilon$-random orbit} if each $x_{n+1}^{\varepsilon}$ is a random variable whose distribution is $P_{\varepsilon}(x_n^{\varepsilon},.)$, namely  $\{x_n^{\varepsilon}\}_{n\ge0}$ coincides with the Markov chains $(\mathcal{X}^{\varepsilon}_n)$, $\varepsilon>0, n\ge 0$.
\end{definition}
The counterpart of an invariant measure in the case of randomly perturbed dynamical systems is called a stationary measure:
\begin{definition}
 A probability measures $\mu_{\varepsilon}$ is called a \textit{stationary measure} if for any $A\in\mathcal A$
$$
\mu_{\varepsilon}(A)=\int_MP_{\varepsilon}(x,A)d\mu_{\varepsilon}(x)
.$$
We call it an absolutely continuous stationary measure ({\em acsm}), if it has a density with respect to the Lebesgue measure (see below).
\end{definition}
\subsubsection{The transfer operator of the random system}
To study Markov processes it is useful to define the transition operator $\mathcal{T}_{\varepsilon}$ acting on bounded real-valued measurable functions $g$ defined on $M$:
$$(\mathcal{T}_{\varepsilon}g)(x)=\int_M g(y)P_{\varepsilon}(x,dy).$$
Its adjoint $\mathcal{T}^{\ast}$ is defined on the space $\mathcal{M}(M)$ of all finite signed measures and  is given by:
$$
\mathcal{T}_{\varepsilon}^{\ast}\mu(A)=\int_MP_{\varepsilon}(x,A)d\mu(x).
$$
The measure $\mu_{\varepsilon}$ is stationary  if and only if $\mathcal{T}_{\varepsilon}^{\ast}\mu_{\varepsilon}=\mu_{\varepsilon}$ \cite{K}. Moreover, a stationary measure is ergodic if, any $A\in \mathcal A$ with $\mathcal{T}_{\varepsilon}1_A=1_A$ implies that $\mu_{\varepsilon}(A)=0$ or $\mu_{\varepsilon}(A)=~1$ \cite{K}.

\bigskip
Since condition (P1) implies the absolute continuity of any stationary measure, it will be convenient to define an operator, $\mathcal{L}_{\varepsilon}$, acting on densities.  That is to say, if $\mu$ is an absolutely continuous measure with respect to $m$, whose density is a function $g\in L^1_m$, then $\mathcal{T}_{\varepsilon}^{\ast}\mu$ is an absolutely continuous measure whose density is $\mathcal{L}_{\varepsilon}g$, where $\mathcal{L}_{\varepsilon}g$ is given by:
\begin{equation}
\mathcal{L}_{\varepsilon}g(x)=\int_Mp_{\varepsilon}(z,x)g(z)dz.
\end{equation}
Thus, densities of acsms are fixed points of $\mathcal{L}_{\varepsilon}$. Results on the existence of acsms can be found in \cite{BaYo} and references therein. We will comment again about this definition of the random transfer operator in Remark \ref{RR}, Section 5.
\subsection{A Banach space and quasi-compactness of $\mathcal L$ and $\mathcal L_{\varepsilon}$}\label{Banach}
We now introduce a Banach space $\mathcal{B}(M)\subset L^1_m$. We assume that
\begin{enumerate}
\item[(B1)]The constant function $1$ belongs to $\mathcal{B}(M)$.
\item[(B2)]The set of discontinuities, $\mathcal D_f$, of any function $f\in \mathcal{B}(M)$ has Lebesgue measure  zero.
\item[(B3)] There is a semi-norm $\left|.\right|$  on $\mathcal{B}(M)$ such that the unit ball of $\mathcal{B}(M)$ is compact in $L^1_m$ with respect to the complete norm $||\cdot ||_B\equiv \left|.\right|+\left\|.\right\|_1$, where $\left\|.\right\|_1$ denotes the $L^1_m$ norm.
\end{enumerate}
We also assume that the transfer operators $\mathcal L$ and $\mathcal L_{\varepsilon}$ satisfy a uniform Lasota-Yorke inequality: there exist an $\eta\in(0,1)$ and a $D\in (0,\infty)$ such that for all $f\in \mathcal{B}(M)$ and $\varepsilon>0$ small enough:
$$|\mathcal{L}f|\leq \eta\left|f\right|+D\left\|f\right\|_1; \hspace{3.5cm} (\normalfont{LY})$$
$$|\mathcal{L}_{\varepsilon}f|\leq \eta\left|f\right|+D\left\|f\right\|_1. \hspace{3.5cm} (\normalfont{RLY})$$
Assumptions (LY) and (RLY) ensure the quasi-compactness of both $\mathcal{L}$ and $\mathcal{L}_{\varepsilon}$, see \cite{Ba} and \cite{AFV} for non-invertible systems.
 In particular, among other things, (LY) implies the existence of a finite number of $T$-ergodic acim, and (RLY) implies the existence of a finite number of ergodic acsm for the Markov process $(\mathcal{X}^{\varepsilon}_n)$. More precisely, we have for the operator $\mathcal {L}$ (see \cite{BG, HH}):
\begin{itemize}
\item The subspace of non-negative fixed points $w$ of $\mathcal{L}$, is a convex set with a finite number of extreme points $w_1,\cdots,w_l$ with supports $\Lambda_k, k=1,\cdots,l$. The supports $\Lambda_k, k=1,\cdots,l$, are mutually disjoint Lebesgue a.e..
\item The measures $\mu_1= w_1 m, \cdots, \mu_l=w_lm$ are ergodic and they give  the ergodic decomposition of any acim $\mu=h m$, $h\in L^1_m$. We also call them the extreme (ergodic) points (measures) decomposing $\mu$.
\end{itemize}
The Lasota-Yorke inequality (RLY) of $\mathcal{L}_{\varepsilon}$ ensures that the random system admits finitely many ergodic acsms $\mu_1^{\varepsilon}=h_{\varepsilon,1}m,\ldots,\mu_K^{\varepsilon}=h_{\varepsilon,K}m$. Note that in general the number $K$ of extreme points for $\mu_{\varepsilon}$ is different from the number $l$ of ergodic components of $\mu$. In our setting, (see Corollary \ref{co1}), we show that the number of ergodic ascms is bounded above by the number of ergodic acims.
\begin{remark}
We point out that for certain perturbations one can prove that (RLY) follows from (LY). See \cite{BaYo, Ke} for precise examples.
\end{remark}
\begin{remark} In our work we are mainly concerned with non-invertible systems. In general, to get spectral results from the Lasota-Yorke inequality, one needs to work with a couple of {\em adapted spaces}. We choose here to work with $ L^1_m$ because it is the partner of the natural space of functions, namely functions of bounded variation, needed when studying the examples of Section 5.  A closer inspection to the proof of Theorem 1 shows that {\em any} pair of adapted spaces will work whenever conditions (B1)-(B3) are satisfied. A generalization of our results to invertible systems will require to replace $L_m^1$ with an appropriate generalized Banach space, see \cite{BaG, DL, DZ} for detailed discussions of such spaces.
\end{remark}
 A well known consequence of assumption (LY) and (RLY) is the following proposition, see for instance \cite{BG, WC}.
 \begin{proposition}\label{prop1}.
Let $\{h_{\varepsilon}\}_{\varepsilon>0}$ be a family of densities of absolutely continuous stationary measures of $(\mathcal X_{\varepsilon}^n)$. Then any limit point, as $\varepsilon\to 0$, of  $\{h_{\varepsilon}\}_{\varepsilon>0}$ in the $L^1_m$-norm is a density of $T$-acim.
\end{proposition}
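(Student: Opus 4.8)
The plan is to reduce the statement, via duality, to testing against continuous functions, which sidesteps any need to estimate $\mathcal{L}_\varepsilon-\mathcal{L}$ in an operator norm. First I would record that limit points exist and are well behaved. If $h_\varepsilon$ is a density of an acsm then $h_\varepsilon\ge0$, $\|h_\varepsilon\|_1=1$ and $\mathcal{L}_\varepsilon h_\varepsilon=h_\varepsilon$, so (RLY) gives $|h_\varepsilon|=|\mathcal{L}_\varepsilon h_\varepsilon|\le\eta|h_\varepsilon|+D\|h_\varepsilon\|_1=\eta|h_\varepsilon|+D$, hence $|h_\varepsilon|\le D/(1-\eta)$ and $\|h_\varepsilon\|_B\le 1+D/(1-\eta)$ uniformly in $\varepsilon$. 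By (B3) the family $\{h_\varepsilon\}$ is relatively compact in $L^1_m$; fixing a sequence $\varepsilon_n\downarrow 0$ with $h_{\varepsilon_n}\to h$ in $L^1_m$ and (along a further subsequence) $m$-a.e., one gets $h\ge0$, $\int_M h\,dm=1$ and $h\in\mathcal{B}(M)$. It then remains only to show $\mathcal{L}h=h$.

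For the core argument, fix $\phi\in C(M)$. Fubini's theorem together with (P1) gives, for every $g\in L^1_m$, the identity $\int_M(\mathcal{L}_\varepsilon g)\,\phi\,dm=\int_M g\,(\mathcal{T}_\varepsilon\phi)\,dm$, where $\mathcal{T}_\varepsilon\phi(z)=\int_M\phi(y)\,P_\varepsilon(z,dy)$ is measurable in $z$ and bounded by $\|\phi\|_\infty$. Applying this with $g=h_{\varepsilon_n}$ and using $\mathcal{L}_{\varepsilon_n}h_{\varepsilon_n}=h_{\varepsilon_n}$ yields $\int_M h_{\varepsilon_n}\phi\,dm=\int_M h_{\varepsilon_n}\,(\mathcal{T}_{\varepsilon_n}\phi)\,dm$ for every $n$. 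The one place where the hypotheses genuinely do work is the claim that $\mathcal{T}_\varepsilon\phi\to\phi\circ T$ as $\varepsilon\to0$: by (P2), $P_\varepsilon(z,\cdot)$ is a probability measure supported in $B_\varepsilon(Tz)$, so $|\mathcal{T}_\varepsilon\phi(z)-\phi(Tz)|\le\sup_{y\in B_\varepsilon(Tz)}|\phi(y)-\phi(Tz)|$, which tends to $0$ uniformly in $z$ by uniform continuity of $\phi$ on the compact set $M$. In particular $\mathcal{T}_{\varepsilon_n}\phi\to\phi\circ T$ uniformly, with uniform bound $\|\phi\|_\infty$.

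Now I would let $n\to\infty$ in the displayed identity. The left-hand side tends to $\int_M h\phi\,dm$ since $h_{\varepsilon_n}\to h$ in $L^1_m$ and $\phi$ is bounded. For the right-hand side, split $\int_M h_{\varepsilon_n}(\mathcal{T}_{\varepsilon_n}\phi)\,dm=\int_M(h_{\varepsilon_n}-h)(\mathcal{T}_{\varepsilon_n}\phi)\,dm+\int_M h(\mathcal{T}_{\varepsilon_n}\phi-\phi\circ T)\,dm+\int_M h\,(\phi\circ T)\,dm$: the first term is at most $\|h_{\varepsilon_n}-h\|_1\|\phi\|_\infty\to0$, and the second tends to $0$ by the uniform convergence (or dominated convergence, using $h\in L^1_m$). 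Hence $\int_M h\phi\,dm=\int_M h\,(\phi\circ T)\,dm=\int_M(\mathcal{L}h)\,\phi\,dm$ for every $\phi\in C(M)$, the last equality being the defining duality of $\mathcal{L}$. Since $h\,m$ and $(\mathcal{L}h)\,m$ are finite signed Borel measures on the compact metric space $M$ agreeing against all continuous functions, they coincide, so $\mathcal{L}h=h$ $m$-a.e.; thus $h\,m$ is a $T$-invariant probability measure absolutely continuous with respect to $m$, i.e.\ $h$ is the density of a $T$-acim.

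I do not expect a deep obstacle here: the only real content is the averaging convergence $\mathcal{T}_\varepsilon\phi\to\phi\circ T$, which is immediate from the localization assumption (P2) (and the remark following it makes clear a slightly fattened support is equally harmless). The point of routing through duality is precisely to avoid the alternative estimate $\|\mathcal{L}h-h\|_1\le 2\|h-h_{\varepsilon_n}\|_1+\|\mathcal{L}_{\varepsilon_n}h-\mathcal{L}h\|_1$, whose final term would force one to prove $\|\mathcal{L}_\varepsilon h-\mathcal{L}h\|_1\to0$ for a merely $L^1$ (or $\mathcal{B}$-type) density $h$, which is a strictly stronger and more delicate statement than what the proposition requires.
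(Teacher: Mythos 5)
Your proof is correct and complete. Note that the paper does not supply its own proof of Proposition~\ref{prop1}; it simply cites \cite{BG, WC} as a ``well known consequence'' of (LY) and (RLY), so there is no in-paper argument to compare against line by line. Your argument is, however, the natural one given the standing assumptions and is consistent with the standard treatment in that literature: the uniform bound $\|h_\varepsilon\|_B\le 1+D/(1-\eta)$ from (RLY) (valid because the Ionescu-Tulcea--Marinescu theory delivers the fixed densities inside $\mathcal B$), the $L^1_m$-compactness from (B3), and then the duality reduction $\int_M(\mathcal L_\varepsilon g)\phi\,dm=\int_M g\,\mathcal T_\varepsilon\phi\,dm$ together with the uniform convergence $\mathcal T_\varepsilon\phi\to\phi\circ T$ on the compact $M$, which is exactly what (P2) is designed to give. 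Your closing remark is the right one to make explicit: the paper itself flags (after Corollary~\ref{co1}) that it does \emph{not} assume $\sup_{\|f\|_{\mathcal B}\le 1}\|(\mathcal L_\varepsilon-\mathcal L)f\|_1\to 0$, and routing through continuous test functions is precisely how one establishes the proposition without that operator-norm hypothesis. One small inessential point: the conclusion only needs $h\in L^1_m$, $h\ge 0$, $\int h\,dm=1$, and $\mathcal L h=h$; the observation that $h\in\mathcal B(M)$ (from the $L^1$-closedness of the compact $\mathcal B$-ball) is a harmless bonus but not required.
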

\subsection{Pseudo-orbits, least elements and the statement of the main result}
We now introduce the notion of a pseudo orbit which will be our main tool to characterize ergodic acsm. Pseudo-orbits were previously used by Ruelle \cite{RU}, followed by Kifer \cite{KI}, to study attractors of randomly perturbed smooth maps. See also \cite{BDV}.
\begin{definition}[pseudo-orbit]
For $\varepsilon>0$, an \textit{$\varepsilon$-pseudo-orbit} is a finite set $\left\{x_i\right\}_{i=0}^n\subset M$ such that $d(Tx_i,x_{i+1})<\varepsilon$ for $i=0\ldots n-1$.
\end{definition}
Using pseudo-orbits, we define a pre-order (reflexive and transitive) ``$\rightarrow$" among the supports $\left\{\Lambda_i\right\}$ of $T$-ergodic acim by writing $\Lambda_i\rightarrow\Lambda_j$ if for any $\varepsilon>0$ there is an $\varepsilon$-pseudo-orbit $\left\{x_i\right\}_{i=0}^k$ such that $x_0\in\Lambda_i$ and $x_k\in \Lambda_j$. Then we define a  relation ``$\sim$" among $\left\{\Lambda_i\right\}$ by writing $\Lambda_i\sim\Lambda_j$ if both $\Lambda_i\rightarrow \Lambda_j$ and $\Lambda_j\rightarrow \Lambda_i$. By ergodicity, given a point  $y\in\Lambda_i\equiv \mbox{supp} \mu_i$ and $\varepsilon>0$,
$\mu_i$-almost any point $x\in \Lambda_i$ will enter the ball $B_{\varepsilon}(y)$ of positive $\mu_i$ measure, and therefore all the pairs $(x,y)\in \Lambda_i$ can be connected with a (finite) $\varepsilon$-pseudo-orbit.  Hence we get an equivalence relation among those ergodic components and we define $\tilde{\Lambda}_i$ the equivalence class which contains $\Lambda_i$. We write
$\tilde{\Lambda}_i\rightarrow\tilde{\Lambda}_j$ if $\Lambda_k\rightarrow \Lambda_l$ for any $\Lambda_k\in\tilde{\Lambda}_i$ and $\Lambda_l\in\tilde{\Lambda}_j$.
\begin{definition}\label{LBS}
 $\tilde{\Lambda}_i$ is said to be a {\em least element},   if there is no $\tilde{\Lambda}_j, \ j\neq i,$ such that $\tilde{\Lambda}_i\rightarrow\tilde{\Lambda}_j$.
\end{definition}
This in particular means that for all $\varepsilon$ small enough no $\varepsilon$-pseudo-orbit can travel from the least element to other equivalence classes. We point out that under the Assumption (LY) and (BLY), there are at most finitely many such equivalence classes $\tilde \Lambda_j$'s, hence, the least elements always exists by Zorn's lemma.  In general, a dynamical system may have more than one least-element. This will be illustrated in Example \ref{Ex1}. We now state our first result:
\begin{theorem} \label{MT1}
Under Assumption (P1) and (P2), if (LY) and (RLY) hold for functions in a Banach space $\mathcal B(M)$ satisfying (B1-B3), then we have:
\begin{enumerate}
    \item  If $\tilde{\Lambda}$ is a least element, then for $\varepsilon$  small enough there exists an open neighborhood ${\mathcal U}_{\varepsilon}\supset \tilde{\Lambda}$ which supports a unique ergodic acsm $\mu_{\varepsilon}^{\tilde{\Lambda}}$.
   \item If $\tilde{\Lambda}$ is not a least element, then for $\varepsilon$ small enough,  $\mu_{\varepsilon}(\tilde{\Lambda})=0$  for any acsm $\mu_{\varepsilon}$. Therefore, for any weak-limit of $\mu_{\varepsilon}$ as $\varepsilon\rightarrow 0$,  $\tilde{\Lambda}$ is a set of measure $0$.
\end{enumerate}
\end{theorem}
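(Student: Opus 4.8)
The plan is to translate everything into statements about the \emph{supports} of absolutely continuous stationary measures and the way $\mathcal L_\varepsilon$ moves them. The one computation I rely on is that, by (P1)--(P2), for any density $g$ the support of $\mathcal L_\varepsilon g$ equals, up to the $m$-null discontinuity set $\mathcal D_T$, the $\varepsilon$-fattening $\Phi_\varepsilon(\mathrm{supp}\,g):=\bigcup_{z\in\mathrm{supp}\,g}B_\varepsilon(Tz)$ of $T(\mathrm{supp}\,g)$. Hence if $h_\varepsilon$ is the density of an acsm then its support $S_\varepsilon=\mathrm{supp}\,h_\varepsilon$ is \emph{forward-saturated} for the one-step fuzzy map $\Phi_\varepsilon=B_\varepsilon\circ T$ (i.e.\ $\Phi_\varepsilon(z)\subset S_\varepsilon$ for $m$-a.e.\ $z\in S_\varepsilon$), and therefore contains the whole set $V_\varepsilon(z):=\bigcup_{n\ge0}\Phi_\varepsilon^n\{z\}$ of endpoints of $\varepsilon$-pseudo-orbits issued from $z$. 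I will also use that $V_\varepsilon(A)$ is increasing in $\varepsilon$, that $\overline{V_\varepsilon(A)}\subset V_{\varepsilon'}(A)$ for $\varepsilon'>\varepsilon$, and that each $\Lambda_k\subset\Phi_\varepsilon(\Lambda_k)$ (because $\mathrm{supp}\,\mu_k=\mathrm{supp}\,T_*\mu_k\subset\overline{T\Lambda_k}$), so that $V_\varepsilon(\tilde\Lambda)$ is an open neighbourhood of $\tilde\Lambda$. Finally, just as for $\mathcal L$, quasi-compactness of $\mathcal L_\varepsilon$ gives that distinct ergodic acsms have $m$-essentially disjoint supports and densities positive $m$-a.e.\ on their supports.

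\emph{Part (1).} Fix a least element $\tilde\Lambda$ and put $\mathcal U_\varepsilon:=V_{2\varepsilon}(\tilde\Lambda)$. First, for all small $\varepsilon$, $\mathcal U_\varepsilon\cap\Lambda_j=\emptyset$ whenever $\Lambda_j\notin\tilde\Lambda$: otherwise, by monotonicity of $V_\cdot$, there would be an $\varepsilon$-pseudo-orbit from $\tilde\Lambda$ to $\Lambda_j$ for \emph{every} $\varepsilon$, i.e.\ $\tilde\Lambda\to\tilde\Lambda_j$, contradicting Definition \ref{LBS}. Next, existence: let $w$ be the normalized sum of the extreme densities $w_k$ with $\Lambda_k\subset\tilde\Lambda$, so $\mathrm{supp}\,w=\tilde\Lambda$ and $\mathrm{supp}(\mathcal L_\varepsilon^n w)\subset\overline{V_\varepsilon(\tilde\Lambda)}\subset\mathcal U_\varepsilon$ for all $n$; by (RLY) the Cesàro averages $n^{-1}\sum_{j<n}\mathcal L_\varepsilon^j w$ are bounded in $\mathcal B(M)$, hence $L^1$-precompact, and any limit point is the density of an acsm supported in $\mathcal U_\varepsilon$, one of whose ergodic components is the desired $\mu_\varepsilon^{\tilde\Lambda}$. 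The crux is then the following: \emph{every ergodic acsm supported in $\mathcal U_\varepsilon$ contains $\tilde\Lambda$.} Indeed, by Proposition \ref{prop1} any $L^1$-limit $h_0$ of such densities $h_\varepsilon$ is a $T$-acim, $h_0=\sum_k c_kw_k$, supported in $\tilde\Lambda$ by the previous step; so some $c_{k_0}>0$ with $\Lambda_{k_0}\subset\tilde\Lambda$, whence $\int_{\Lambda_{k_0}}h_\varepsilon\to c_{k_0}>0$ and $S_\varepsilon\cap\Lambda_{k_0}$ has positive $\mu_{k_0}$-measure for all small $\varepsilon$; since $S_\varepsilon$ is $\Phi_\varepsilon$-saturated and $\Lambda_{k_0}$ carries an ergodic acim, a standard density argument (a $\mu_{k_0}$-generic point of $S_\varepsilon\cap\Lambda_{k_0}$ has a dense forward $T$-orbit, which is an $\varepsilon$-pseudo-orbit staying in $S_\varepsilon$) forces $\Lambda_{k_0}\subset S_\varepsilon$; and for any other $\Lambda_l\subset\tilde\Lambda$, the relation $\Lambda_{k_0}\to\Lambda_l$ provides an $\varepsilon$-pseudo-orbit from $\Lambda_{k_0}\subset S_\varepsilon$ to $\Lambda_l$ which (after an arbitrarily small perturbation off $\mathcal D_T$) stays in $S_\varepsilon$, so $\Lambda_l\subset S_\varepsilon$ by the same argument. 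Consequently two ergodic acsms supported in $\mathcal U_\varepsilon$ would both contain $\tilde\Lambda$, contradicting essential disjointness of their supports; this gives uniqueness and completes Part (1).

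\emph{Part (2).} Suppose $\tilde\Lambda$ is not a least element and, for some $\varepsilon\to0$, an acsm $\mu_\varepsilon$ has $\mu_\varepsilon(\tilde\Lambda)>0$; an ergodic component $h_\varepsilon$ of $\mu_\varepsilon$ then has $m(S_\varepsilon\cap\tilde\Lambda)>0$, hence (density argument) $S_\varepsilon\supset\Lambda_k$ for some $\Lambda_k\subset\tilde\Lambda$. Pick a least element $\tilde\Lambda_{\min}$ with $\tilde\Lambda\to\tilde\Lambda_{\min}$ and $\tilde\Lambda_{\min}\ne\tilde\Lambda$ (it exists: $\tilde\Lambda$ maps to a strictly smaller class, and finiteness plus Zorn yields a minimal one; $\tilde\Lambda_{\min}=\tilde\Lambda$ would give $\tilde\Lambda\sim\tilde\Lambda_{\min}$, impossible). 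Following an $\varepsilon$-pseudo-orbit from $\Lambda_k\subset S_\varepsilon$ into some $\Lambda\subset\tilde\Lambda_{\min}$ and using saturation and the density argument gives $\Lambda\subset S_\varepsilon$; but by Part (1) applied to $\tilde\Lambda_{\min}$ the unique ergodic acsm $\mu^{\min}_\varepsilon$ supported in $V_{2\varepsilon}(\tilde\Lambda_{\min})$ also has support $\supset\tilde\Lambda_{\min}\supset\Lambda$, so $h_\varepsilon=\mu^{\min}_\varepsilon$ by disjointness of ergodic supports, whence $S_\varepsilon\subset V_{2\varepsilon}(\tilde\Lambda_{\min})$, which (first step of Part (1), now for $\tilde\Lambda_{\min}$) is disjoint from $\Lambda_k\notin\tilde\Lambda_{\min}$ — contradicting $\Lambda_k\subset S_\varepsilon$. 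Hence $\mu_\varepsilon(\tilde\Lambda)=0$ for all small $\varepsilon$. For the last assertion, if $h_{\varepsilon_n}\to h_0=\sum_kc_kw_k$ in $L^1$ with $c_k>0$ for some $\Lambda_k\subset\tilde\Lambda$, then $\int_{\Lambda_k}h_{\varepsilon_n}\to c_k>0$, contradicting $\mu_{\varepsilon_n}(\Lambda_k)=0$; so every weak limit assigns $\tilde\Lambda$ zero measure.

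The main difficulty is the passage between the purely topological notion of $\varepsilon$-pseudo-orbit (and the fuzzy map $\Phi_\varepsilon$) and the measure-theoretic support of an acsm: one must control precisely how the $m$-null set $\mathcal D_T$ enters the identity ``$\mathrm{supp}\,\mathcal L_\varepsilon g=\overline{\Phi_\varepsilon(\mathrm{supp}\,g)}$'', and one must make rigorous the ``density argument'' that a single point of $S_\varepsilon$ lying on an ergodic component $\Lambda_k$ already forces $\Lambda_k\subset S_\varepsilon$ (this is where the ergodicity of $\mu_k$ and the forward-saturation of $S_\varepsilon$ are combined). Once these two technical points are in place, the remaining ingredients — existence via Cesàro averages and (RLY), essential disjointness of ergodic supports, Proposition \ref{prop1}, and the bookkeeping of pseudo-orbits inside and between equivalence classes — are routine.
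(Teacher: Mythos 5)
Your proof is correct and reaches the paper's conclusions, but the uniqueness mechanism is genuinely different from theirs. The paper proves uniqueness in Part~1 by forming $h_\varepsilon-\min(h_\varepsilon,h_\varepsilon')$ for two putative normalized fixed densities, observing that it is a nonnegative fixed point of $\mathcal L_\varepsilon$, and invoking Lemma~\ref{pos} (positivity of $\mathcal L_\varepsilon^i f$ propagates forward along $\varepsilon$-pseudo-orbits for any $f\in\mathcal B$) to force it to be strictly positive $m$-a.e.\ on $\mathcal U_\varepsilon$, contradicting equality of integrals. You never form this difference. Instead you repackage the content of Lemma~\ref{pos} as a forward-saturation property of the support $S_\varepsilon$ under the fuzzy map $\Phi_\varepsilon$, combine it with Proposition~\ref{prop1} to prove the intermediate claim that every ergodic acsm density supported in $\mathcal U_\varepsilon$ is positive $m$-a.e.\ on the whole of $\tilde\Lambda$, and then deduce uniqueness from $m$-essential disjointness of supports of distinct ergodic acsms, a spectral fact you invoke rather than rederive. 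The trade-off: the paper's minimum trick is elementary and self-contained, whereas yours, by working only with pseudo-orbits issued from inside $\tilde\Lambda$, sidesteps any need to locate a point of the least element at which the difference of two fixed densities is strictly positive. Your other ingredients are interchangeable variants of the paper's: Ces\`aro averages plus compactness from (RLY) for existence (the paper applies the Ionescu-Tulcea--Marinescu theorem directly on $\mathcal B(\mathcal U_\varepsilon)$), and $\mathcal U_\varepsilon=V_{2\varepsilon}(\tilde\Lambda)$ versus the paper's nested $\varepsilon$-fattening, which is essentially $V_\varepsilon(\tilde\Lambda)$. Part~2 is structurally parallel in both treatments: an $\varepsilon$-pseudo-orbit from $\tilde\Lambda_0$ into some least element, then uniqueness there, you via support containment, they via the minimum again. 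One small caution: when you conclude ``forces $\Lambda_{k_0}\subset S_\varepsilon$'', the essential-disjointness step actually needs $h_\varepsilon>0$ $m$-a.e.\ on $\Lambda_{k_0}$; your density argument does deliver this (the balls $B_\varepsilon(T^n z_0)$ cover $\Lambda_{k_0}$ and the density is a.e.\ positive on each), but topological support inclusion alone is strictly weaker, so state the $m$-a.e.\ version explicitly.
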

Theorem \ref{MT1} implies the following three corollaries:
\begin{corollary}\label{co1}
For $\varepsilon>0$ small enough, the number of acsms of the random system $(\mathcal X_{\varepsilon}^n)$ is bounded above by the number of acims of the map $T$. In particular, if $T$ has a unique acim, then $(\mathcal X_{\varepsilon}^n)$ has a unique acsm.
\end{corollary}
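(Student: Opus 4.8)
The plan is to read the bound off Theorem~\ref{MT1} by a counting argument. From that theorem (and its proof) I would use two facts: (a) for $\varepsilon$ small, each least element $\tilde\Lambda$ carries an ergodic acsm $\mu_\varepsilon^{\tilde\Lambda}$, the one produced by part~(1); and (b) conversely, \emph{every} ergodic acsm of $(\mathcal X_\varepsilon^n)$ equals $\mu_\varepsilon^{\tilde\Lambda}$ for some least element $\tilde\Lambda$ --- the ``converse'' announced in the introduction. Granting (b), for $\varepsilon$ small the ergodic acsms form a subset of $\{\mu_\varepsilon^{\tilde\Lambda}:\tilde\Lambda\text{ a least element}\}$, so their number is at most the number of least elements, which is at most the number of equivalence classes $\tilde\Lambda_j$, which is at most $l$ (each class being a non-empty union of distinct supports $\Lambda_i$, of which there are $l$). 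Since every acsm is a convex combination of the ergodic ones, this is the assertion of the corollary.

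The work is in (b). Let $\mu_\varepsilon = h_\varepsilon\, m$ be an ergodic acsm. Inserting $h_\varepsilon = \mathcal L_\varepsilon h_\varepsilon$ into (RLY) and using $\|h_\varepsilon\|_1 = 1$ gives $|h_\varepsilon| \le \eta|h_\varepsilon| + D$, hence a uniform bound $|h_\varepsilon| \le D/(1-\eta)$; by (B3) the family $\{h_\varepsilon\}$ is precompact in $L^1_m$. Pick $\varepsilon_n \to 0$ with $h_{\varepsilon_n} \to h_0$ in $L^1_m$; by Proposition~\ref{prop1} the measure $\mu_0 := h_0\, m$ is a $T$-acim, and $\|h_0\|_1 = 1$, so it is nonzero. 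By the ergodic decomposition, $\mu_0$ is supported mod $m$ on $\bigcup_k \Lambda_k = \bigcup_j \tilde\Lambda_j$, so $\mu_0(\tilde\Lambda_j) > 0$ for some $j$; but Theorem~\ref{MT1}(2) gives $\mu_0(\tilde\Lambda_j) = 0$ whenever $\tilde\Lambda_j$ is not a least element, so this $\tilde\Lambda := \tilde\Lambda_j$ is a least element. It then remains to pass from ``$\mu_0$ charges $\tilde\Lambda$'' to ``$\mu_\varepsilon = \mu_\varepsilon^{\tilde\Lambda}$ for $\varepsilon$ small'': since $\mu_{\varepsilon_n} \to \mu_0$ weakly and $\mu_0$ puts positive mass on $\tilde\Lambda \subset \mathcal U_{\varepsilon_n}$, the measure $\mu_{\varepsilon_n}$ charges $\mathcal U_{\varepsilon_n}$ for $n$ large; if the neighbourhoods $\mathcal U_\varepsilon$ of Theorem~\ref{MT1} are, as in its proof, essentially invariant under the perturbed dynamics, then an ergodic acsm charging $\mathcal U_\varepsilon$ must be supported in it, and the uniqueness in Theorem~\ref{MT1}(1) forces $\mu_{\varepsilon_n} = \mu_\varepsilon^{\tilde\Lambda}$. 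Since there are finitely many ergodic acsms and this holds along a subsequence, $\mu_\varepsilon = \mu_\varepsilon^{\tilde\Lambda}$ for all small $\varepsilon$.

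I expect the main obstacle to be precisely that last clause: it is the one step not visible from the bare statement of Theorem~\ref{MT1}, since one needs that a stationary density cannot deposit a fixed positive amount of mass inside $\mathcal U_\varepsilon$ without being supported there, which depends on how $\mathcal U_\varepsilon$ is constructed in the proof of Theorem~\ref{MT1}. The rest --- the uniform bound from (RLY), compactness from (B3), Proposition~\ref{prop1}, and counting equivalence classes --- is routine. Finally, for the last sentence of the corollary: if $T$ has a unique acim then $l=1$, so the bound gives $\#\{\text{acsms}\}\le 1$, and since acsms exist by (RLY), $(\mathcal X_\varepsilon^n)$ has exactly one acsm.
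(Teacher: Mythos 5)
Your route is essentially the paper's: establish the converse assignment ``every ergodic acsm corresponds to exactly one least element'' (this is precisely Corollary~\ref{co3} and the paper's short \emph{proof of Corollary 2}), then count least elements against equivalence classes against ergodic acims. The ingredients you use to locate the least element --- the uniform bound $|h_\varepsilon|\le D/(1-\eta)$ from (RLY), compactness from (B3), Proposition~\ref{prop1}, and Theorem~\ref{MT1}(2) --- are exactly the paper's. You also correctly identify where the real work lies: passing from ``$\mu_\varepsilon$ charges $\mathcal U_\varepsilon$'' to ``$\mu_\varepsilon$ is supported in $\mathcal U_\varepsilon$ and hence equals $\mu_\varepsilon^{\tilde\Lambda}$.'' The paper resolves this by reusing the proof of part~2 of Theorem~\ref{MT1}: by the positivity propagation of Lemma~\ref{pos} both densities are positive on a common positive-measure subset of $\tilde\Lambda$, so $\min(h_\varepsilon, h_\varepsilon^{\tilde\Lambda})$ is a nonzero fixed point of $\mathcal L_\varepsilon$ supported in $\mathcal U_\varepsilon$, and uniqueness on $\mathcal U_\varepsilon$ plus normalization force the two densities to coincide. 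Your alternative fill --- that $\mathcal U_\varepsilon = \bigcup_n B_\varepsilon(T\,\mathcal U_{\varepsilon,n-1})$ is absorbing under (P2), so an ergodic stationary measure that charges it must give it full measure --- is a valid substitute, but you would need to actually prove that small lemma (e.g., $\mathcal T_\varepsilon^n \mathbf{1}_{\mathcal U_\varepsilon}$ increases to a $\mathcal T_\varepsilon$-invariant function which ergodicity forces to be constant $\mu_\varepsilon$-a.e.).

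Two smaller corrections. First, your closing inference ``since there are finitely many ergodic acsms and this holds along a subsequence, $\mu_\varepsilon = \mu_\varepsilon^{\tilde\Lambda}$ for all small $\varepsilon$'' does not follow as written, because both the ergodic acsms and the sets $\mathcal U_\varepsilon$ vary with $\varepsilon$; the standard repair is to argue by contradiction (if the claim failed along a sequence $\varepsilon_n\to 0$, extract an $L^1_m$-convergent subsequence of the offending densities and rerun your argument). Second, note that once you know each ergodic acsm is $\mu_\varepsilon^{\tilde\Lambda}$ for a least element $\tilde\Lambda$, you should also observe that this assignment is injective --- two ergodic acsms assigned to the same $\tilde\Lambda$ would both equal $\mu_\varepsilon^{\tilde\Lambda}$ by the uniqueness in Theorem~\ref{MT1}(1), and distinct least elements have disjoint $\mathcal U_\varepsilon$'s for $\varepsilon$ small --- which is what the paper records at the end of its proof of Corollary~\ref{co3} and is what actually yields the counting bound.
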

\begin{remark}
In Theorem \ref{MT1} we do not assume that $\underset{{||f||_{\mathcal B}\le 1}}{\sup}||(\mathcal L_{\varepsilon}-\mathcal L)f||_1\to 0$. Thus, Corollary \ref{co1} does not follow directly from the spectral stability result of \cite{KL1}.
\end{remark}
We also have the converse of part 1 of Theorem \ref{MT1}, namely:

\begin{corollary}\label{co3}
For $\varepsilon>0$ small enough, the support of any ergodic acsm contains exactly one least element.
\end{corollary}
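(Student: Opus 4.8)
The plan is to derive the corollary from the two parts of Theorem~\ref{MT1} and Proposition~\ref{prop1}; the real content is to show that, for $\varepsilon$ small, the ergodic acsms $\mu_{\varepsilon}^{(1)},\dots,\mu_{\varepsilon}^{(r)}$ attached by part~(1) of the theorem to the finitely many least elements $\tilde\Lambda^{(1)},\dots,\tilde\Lambda^{(r)}$ \emph{exhaust} the ergodic acsms of the random system. Granting this, the corollary follows quickly: each $\mu_{\varepsilon}^{(i)}$ lives in a neighbourhood ${\mathcal U}_{\varepsilon}^{(i)}\supset\tilde\Lambda^{(i)}$ that shrinks down to $\tilde\Lambda^{(i)}$ as $\varepsilon\to0$, so for $\varepsilon$ small $m({\mathcal U}_{\varepsilon}^{(i)}\cap\Lambda_k)<m(\Lambda_k)$ for every $\Lambda_k\not\subseteq\tilde\Lambda^{(i)}$; hence $\operatorname{supp}\mu_{\varepsilon}^{(i)}\subseteq{\mathcal U}_{\varepsilon}^{(i)}$ contains no equivalence class other than $\tilde\Lambda^{(i)}$, in particular no other least element, while $\tilde\Lambda^{(i)}\subseteq\operatorname{supp}\mu_{\varepsilon}^{(i)}$ — this last inclusion being read off from the construction in Theorem~\ref{MT1}, since the positivity set $\{h_{\varepsilon}^{(i)}>0\}$ is open and forward invariant under $x\mapsto B_{\varepsilon}(Tx)$ by (P1)--(P2) and the equation $h_{\varepsilon}^{(i)}=\mathcal L_{\varepsilon}h_{\varepsilon}^{(i)}$, and it meets $\tilde\Lambda^{(i)}$, so by transitivity of $\varepsilon$-pseudo-orbits inside the equivalence class its closure covers $\tilde\Lambda^{(i)}$. (That the $\mu_{\varepsilon}^{(i)}$ are pairwise distinct for $\varepsilon$ small is clear, since the ${\mathcal U}_{\varepsilon}^{(i)}$ are eventually pairwise disjoint modulo Lebesgue-null sets and the measures are absolutely continuous.)

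To prove the exhaustion statement I would argue by contradiction along a sequence $\varepsilon\to0$: suppose $\nu_{\varepsilon}=h_{\varepsilon}m$ is an ergodic acsm different from all the $\mu_{\varepsilon}^{(i)}$. By (RLY) and (B3) a subsequence of $\{h_{\varepsilon}\}$ converges in $L^1_m$ to a density $h$ of a $T$-acim (Proposition~\ref{prop1}); write $h=\sum_{j\in S}c_jw_j$ with $c_j>0$. If every $\Lambda_j$, $j\in S$, belonged to a non-least equivalence class, then part~(2) of Theorem~\ref{MT1} would give $\nu_{\varepsilon}(\Lambda_j)=0$ for all $j\in S$ and $\varepsilon$ small, contradicting $\nu_{\varepsilon}(\Lambda_j)=\int_{\Lambda_j}h_{\varepsilon}\,dm\to c_j>0$; hence some $\Lambda_{j_0}$, $j_0\in S$, lies inside a least element $\tilde\Lambda$ and $\nu_{\varepsilon}(\Lambda_{j_0})>0$ for $\varepsilon$ small. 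Now let $A=A_{\varepsilon}$ be the set of points reachable from $\tilde\Lambda$ by forward $\varepsilon$-pseudo-orbits, i.e. $A=\tilde\Lambda\cup\bigcup_{k\ge1}R_k$ with $R_0=\tilde\Lambda$ and $R_{k+1}=B_{\varepsilon}(TR_k)$. By (P2) one has $P_{\varepsilon}(x,A)=1$ for a.e.\ $x\in A$ (because $B_{\varepsilon}(Tx)\subseteq A$), so expanding $\nu_{\varepsilon}(A)=\int_M P_{\varepsilon}(x,A)\,d\nu_{\varepsilon}$ forces $P_{\varepsilon}(x,A)=0$ for $\nu_{\varepsilon}$-a.e.\ $x\notin A$; thus $\mathcal T_{\varepsilon}1_A=1_A$ $\nu_{\varepsilon}$-a.e., and ergodicity of $\nu_{\varepsilon}$ together with $\nu_{\varepsilon}(A)\ge\nu_{\varepsilon}(\Lambda_{j_0})>0$ yields $\nu_{\varepsilon}(A)=1$. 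Since $\tilde\Lambda$ is a least element, for $\varepsilon$ small $A\subseteq{\mathcal U}_{\varepsilon}^{\tilde\Lambda}$ (no $\varepsilon$-pseudo-orbit can escape a small neighbourhood of a least element — the uniform ``no escape'' property underlying Theorem~\ref{MT1} and the remark following Definition~\ref{LBS}), so $\nu_{\varepsilon}$ is an ergodic acsm supported in ${\mathcal U}_{\varepsilon}^{\tilde\Lambda}$, whence $\nu_{\varepsilon}=\mu_{\varepsilon}^{\tilde\Lambda}$ by the uniqueness in Theorem~\ref{MT1}(1) — a contradiction.

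I expect the second step — upgrading ``$\nu_{\varepsilon}$ charges a piece $\Lambda_{j_0}$ buried inside the least element $\tilde\Lambda$'' to ``$\nu_{\varepsilon}$ is carried entirely by the $\varepsilon$-pseudo-orbit hull of $\tilde\Lambda$'' — to be the main obstacle. It hinges on two points I regard as the delicate ones: that the positivity set of $h_{\varepsilon}$ is open and forward invariant under $x\mapsto B_{\varepsilon}(Tx)$ (so that absolute continuity and the ergodicity criterion $\mathcal T_{\varepsilon}1_A=1_A$ can be applied to the absorbing set $A$), and the quantitative fact that forward $\varepsilon$-pseudo-orbits issued from a least element remain in an arbitrarily small neighbourhood of it once $\varepsilon$ is small — a compactness/limiting argument (pseudo-orbits approximating genuine $T$-orbits, which stay in the invariant set $\tilde\Lambda$) essentially contained in the proof of Theorem~\ref{MT1}. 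The rest is routine, up to the customary bookkeeping with the Lebesgue-null sets $\mathcal D_T$ and the discontinuity sets of the densities.
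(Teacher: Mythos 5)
Your argument is correct, and the overall strategy (exhaustion of ergodic acsms by those supplied by Theorem~\ref{MT1}(1), then uniqueness of the least element in each support) matches the paper's, but the technique you use for the exhaustion step is genuinely different. The paper shows that any ergodic acsm $\mu^\varepsilon_1$ must meet a least element $\tilde\Lambda$ (Proposition~\ref{prop1} plus part~2 of Theorem~\ref{MT1}), and then identifies $\mu^\varepsilon_1$ with the acsm $\mu^\varepsilon_2$ on $\mathcal{U}_\varepsilon\supset\tilde\Lambda$ by rerunning the $\min(h^1,h^2)$ fixed-point argument from the proof of part~2 together with Lemma~\ref{pos}; uniqueness of the least element then follows because disjoint least elements lie at strictly positive distance. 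You instead work with the pseudo-orbit hull $A$ --- which, since $T\tilde\Lambda=\tilde\Lambda$, is exactly the set $\mathcal{U}_\varepsilon$ built in the proof of Theorem~\ref{MT1}(1) --- verify $\mathcal{T}_\varepsilon 1_A=1_A$ $\nu_\varepsilon$-a.e.\ from stationarity and (P2), and use the ergodicity criterion to conclude $\nu_\varepsilon(A)=1$, so that uniqueness on $\mathcal{U}_\varepsilon$ applies. This is a clean probabilistic alternative to the paper's analytic route: it makes the roles of (P2) and of the ergodicity criterion explicit and bypasses a second pass through the $\min$ trick. Two minor points to tighten: the openness of $\{h^{(i)}_\varepsilon>0\}$ you appeal to is neither needed for the absorbing-set step nor generally valid for BV-type densities --- what the paper's proof of part~1 actually gives (via Lemma~\ref{pos} along pseudo-orbits) is $h_\varepsilon>0$ $m$-a.e.\ on $\mathcal{U}_\varepsilon$, which already yields $\tilde\Lambda\subset\operatorname{supp}\mu^{\tilde\Lambda}_\varepsilon$; and the ``no-escape'' property you single out as the main obstacle is already established in the construction of $\mathcal{U}_\varepsilon$, whose closure the paper shows to be disjoint from the supports of all other ergodic acim for $\varepsilon$ small, so nothing new needs to be proved there.
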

Therefore, for $\varepsilon$ small enough,  we can uniquely associate to each least element ${\tilde{\Lambda}}$  the family of densities $\{h_{\varepsilon}^{\tilde{\Lambda}}\}_{\varepsilon>0}$.\\
\begin{definition}
We  say that the system $(M,\mathcal{A}, T)$  is {\em strongly stochastically stable} if any $L^1_m$ limit point  of the densities of the   ergodic absolutely continuous stationary  measures $\{\mu_j^{\varepsilon}\}_{\varepsilon>0}$, $j=1,\ldots, K$ and as $\varepsilon$ goes to zero, is a convex combination of the densities of the absolutely continuous ergodic extreme measures of $\mu$.
\end{definition}In our setting, by Proposition \ref{prop1} and Theorem \ref{MT1}, any limit point of the family $\{h_{\varepsilon}^{\tilde{\Lambda}}\}_{\varepsilon>0}$ as $\varepsilon$ goes to $0$,  is a convex combination of the densities\footnote{Note that within the general setting of this paper, we do not claim that the values of the weights in the convex combination that determine the limiting density can be easily identified. However, for certain perturbations of one dimensional maps, using insights form open dynamical systems, such weights can be determined. See \cite{GHW, BV1, BV2}.} of the  ergodic measures spanning $\tilde{\Lambda}$. Hence we proved that
\begin{corollary}\label{co2}
The system $(M,\mathcal{A}, T)$ is strongly stochastically stable.
\end{corollary}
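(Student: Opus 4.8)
The plan is to obtain Corollary~\ref{co2} as a short consequence of Proposition~\ref{prop1} and the ergodic decomposition of acims recalled in Subsection~\ref{Banach}, with Theorem~\ref{MT1} supplying the sharper identification alluded to in the footnote. Fix any family $\{h_{\varepsilon}\}_{\varepsilon>0}$ in which each $h_{\varepsilon}$ is the density of an ergodic acsm of $(\mathcal X^{\varepsilon}_n)$. First I would check that such a family has $L^1_m$-limit points as $\varepsilon\to 0$, so that the statement is not vacuous. Since $h_{\varepsilon}$ is a non-negative fixed point of $\mathcal L_{\varepsilon}$ lying in $\mathcal B(M)$, with $\|h_{\varepsilon}\|_1=1$, applying (RLY) to $h_{\varepsilon}$ gives $|h_{\varepsilon}|=|\mathcal L_{\varepsilon}h_{\varepsilon}|\le \eta|h_{\varepsilon}|+D$, hence $|h_{\varepsilon}|\le D/(1-\eta)$ and $\|h_{\varepsilon}\|_B\le 1+D/(1-\eta)$, uniformly in $\varepsilon$. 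By (B3) the unit ball of $\mathcal B(M)$ is compact in $L^1_m$, so $\{h_{\varepsilon}\}_{\varepsilon>0}$ is relatively compact in $L^1_m$ and limit points exist.

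Next, let $h$ be an $L^1_m$-limit point, say $h=\lim_k h_{\varepsilon_k}$ with $\varepsilon_k\to 0$. Each $h_{\varepsilon_k}$ is the density of a stationary measure of the chain at noise level $\varepsilon_k$, so Proposition~\ref{prop1} applies and shows that $h$ is the density of a $T$-acim. By the structure recalled in Subsection~\ref{Banach}, the ergodic acims $\mu_1=w_1m,\dots,\mu_l=w_lm$ have pairwise disjoint supports and give the ergodic decomposition of every acim; hence $h=\sum_{k=1}^{l}c_k w_k$ with $c_k\ge 0$ and $\sum_{k=1}^{l}c_k=1$ (the latter because $\|h\|_1=\lim_k\|h_{\varepsilon_k}\|_1=1$ and $\|w_k\|_1=1$). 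So every $L^1_m$-limit point of the densities of ergodic acsms is a convex combination of the densities of the ergodic extreme measures, which is precisely strong stochastic stability; this proves the corollary.

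For the sharper statement recorded in the footnote I would additionally invoke Theorem~\ref{MT1}(2), which for $\varepsilon$ small gives $\mu_{\varepsilon}(\tilde\Lambda_i)=0$ for every equivalence class $\tilde\Lambda_i$ that is not a least element; passing to the limit this forces $c_k=0$ whenever $\Lambda_k$ lies in a non-least class. Combining this with Theorem~\ref{MT1}(1) and Corollary~\ref{co3}---each $h_{\varepsilon}$ being supported in a neighbourhood of the single least element $\tilde\Lambda$ associated to it, and these neighbourhoods collapsing onto $\tilde\Lambda$ as $\varepsilon\to 0$---one concludes that $h$ is concentrated on $\tilde\Lambda$, so only the ergodic measures spanning $\tilde\Lambda$ enter the convex combination. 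I do not anticipate a genuine obstacle at this stage: this is a corollary in the literal sense, and the only points requiring attention are the uniform $\mathcal B$-bound that guarantees the limit points exist and the accounting of the convex coefficients; the substantive work has already been done in Theorem~\ref{MT1} and Proposition~\ref{prop1}.
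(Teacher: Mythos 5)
Your proof is correct and follows essentially the same route as the paper: Proposition~\ref{prop1} identifies any $L^1_m$-limit point as the density of a $T$-acim, the ergodic decomposition then exhibits it as a convex combination of the $w_k$, and Theorem~\ref{MT1} (together with Corollary~\ref{co3}) supplies the finer localization to the ergodic components spanning the corresponding least element. Your preliminary check that limit points exist---the uniform $\mathcal B$-bound from (RLY) applied to the fixed points $h_{\varepsilon}$, combined with (B3)---is a sensible addition of detail that the paper leaves implicit.
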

\begin{remark}
Our definition of stochastic stability is inspired by the  definition in Section 1.1 of \cite{AA}. Whenever there is only one absolutely continuous ergodic invariant measure $\mu$  and only one absolutely continuous ergodic stationary measure $\mu_{\varepsilon}$, it makes sense to speak of strongly stochastic stability of the {\em measure} $\mu$ if the density of $\mu_{\varepsilon}$ converges in $L^1_m$ to the density of $\mu$, see for instance \cite{WC}. Adapting this point of view we can restate the previous corollary by saying that: {\em an absolutely continuous invariant measure for the original system $(M,\mathcal{A}, T)$ whose support is the union of least elements, is strongly stochastically stable}.
\end{remark}
\section{Proofs }\label{pf}
We first prove a key lemma.
\begin{lemma}\label{pos}
Let $f\in \mathcal{B}\equiv \mathcal{B}(M)$ and $\left\{x_i\right\}_{i= 0}^N\subset M$ be an $\varepsilon$-pseudo-orbit such that $x_j\in M\setminus (\mathcal{D}_{\mathcal{L}_{\varepsilon}^jf}\cap \mathcal{D}_T)$, $0\le j\le N$ and $\mathcal{L}_{\varepsilon}^if(x_i)>0$ for some $0\le i<N$.  Then for all $i<k\le N$ we have $\mathcal{L}_{\varepsilon}^kf(x_k)>0$.
\end{lemma}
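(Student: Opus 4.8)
The plan is to set $g_k:=\mathcal L_\varepsilon^k f$, so that the identity $g_{k+1}(x)=\int_M p_\varepsilon(z,x)\,g_k(z)\,dz$ holds \emph{pointwise} for every $x\in M$; in particular the values $\mathcal L_\varepsilon^k f(x_k)$ appearing in the statement are unambiguously defined. We may and do assume $f\ge 0$; then, $\mathcal L_\varepsilon$ being a positive operator, every $g_k$ is non-negative. With this in hand, a trivial induction reduces the lemma to the following \emph{one-step} statement: if $g_j(x_j)>0$, $d(Tx_j,x_{j+1})<\varepsilon$, and $x_j\notin\mathcal D_{g_j}\cap\mathcal D_T$, then $g_{j+1}(x_{j+1})>0$; applying it successively for $j=i,i+1,\dots,N-1$ gives the claim.

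For the one-step statement, the starting observation is that, by (P2), $p_\varepsilon(z,x_{j+1})>0$ exactly when $d(Tz,x_{j+1})<\varepsilon$. Since $d(Tx_j,x_{j+1})<\varepsilon$ is a \emph{strict} inequality, the idea is to produce a Borel set $S$ of positive Lebesgue measure, contained in a small neighbourhood of $x_j$, on which simultaneously $d(Tz,x_{j+1})<\varepsilon$ and $g_j(z)>0$; then, using $g_j\ge 0$ to rule out cancellation,
$$g_{j+1}(x_{j+1})=\int_M p_\varepsilon(z,x_{j+1})\,g_j(z)\,dz\ \ge\ \int_S p_\varepsilon(z,x_{j+1})\,g_j(z)\,dz\ >\ 0 .$$
A convenient preliminary remark is that for any $w\ge 0$ the set $\{\mathcal L_\varepsilon w>0\}$ is \emph{open}: if $\mathcal L_\varepsilon w(x_0)>0$ then $\{z:p_\varepsilon(z,x_0)>0,\ w(z)>0\}$ has positive measure, and by (P2) it is the increasing union of the sets where $d(Tz,x_0)\le\varepsilon-\tfrac1n$, one of which must have positive measure; the triangle inequality then keeps $d(Tz,x)<\varepsilon$ on that set for all $x$ near $x_0$, so $\mathcal L_\varepsilon w(x)>0$ there. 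In particular $\{g_j>0\}$ is open for every $j\ge 1$, so $g_j(x_j)>0$ already yields $g_j>0$ on a ball about $x_j$.

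It remains to build $S$, and this is exactly where the hypothesis $x_j\notin\mathcal D_{g_j}\cap\mathcal D_T$ enters, via a dichotomy. If $x_j\notin\mathcal D_T$, then $T$ is continuous at $x_j$ and $Tx_j\in B_\varepsilon(x_{j+1})$, so $T^{-1}\big(B_\varepsilon(x_{j+1})\big)$ contains a neighbourhood of $x_j$; intersecting it with a ball on which $g_j>0$ (available from the preliminary remark when $j\ge 1$, and from the hypothesis $x_0\notin\mathcal D_f$ when $j=0$) gives $S$. If instead $x_j\in\mathcal D_T$, then necessarily $x_j\notin\mathcal D_{g_j}$, so $g_j$ is continuous and strictly positive on a whole neighbourhood $V$ of $x_j$, and one takes $S=V\cap T^{-1}\big(B_\varepsilon(x_{j+1})\big)$, using that this intersection has positive Lebesgue measure near $x_j$ (which, in the piecewise setting of the applications, comes from the branch of $T$ whose closure reaches $x_j$ with value $Tx_j$). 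I expect this last point to be the main obstacle: one must control the interaction between the discontinuities of $T$ and those of the iterated densities, and the whole force of the two-part condition on each $x_j$ is to guarantee that at every step of the pseudo-orbit at least one of the two relevant functions is continuous there, which is precisely what makes positivity propagate forward.
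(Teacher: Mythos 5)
Your overall plan — reduce to a one‑step statement, produce a positive–measure set $S$ near $x_j$ on which both $p_\varepsilon(\cdot,x_{j+1})>0$ and $g_j>0$, and use non‑negativity of $g_j$ to avoid cancellation — is exactly the mechanism of the paper's proof. The paper's argument is shorter because it uses \emph{both} continuities at $x_i$ simultaneously: continuity of $g_i=\mathcal L_\varepsilon^if$ at $x_i$ to get $g_i>0$ on a $\delta$--ball, and continuity of $T$ at $x_i$ to shrink $\delta$ so that $d(Ty,Tx_i)\le\tfrac12\bigl(\varepsilon-d(Tx_i,x_{i+1})\bigr)$ for $y$ in that ball. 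In other words the paper reads the hypothesis as though it removed the \emph{union} $\mathcal D_{\mathcal L_\varepsilon^jf}\cup\mathcal D_T$, not the intersection as literally written; and that is almost certainly what is intended, since both discontinuity sets are Lebesgue‑null (by (B2) and $m(\mathcal D_T)=0$), so the full‑measure set of admissible base points used in the proof of Theorem~1 is unchanged by the substitution. You are right to flag, implicitly, that as written the lemma only guarantees one of the two continuities per step. Your preliminary remark — that $\{\mathcal L_\varepsilon w>0\}$ is open for $w\ge 0$, via (P2) and the exhaustion by $\{d(Tz,x_0)\le\varepsilon-1/n\}$ — is a genuinely useful observation absent from the paper: it shows the continuity hypothesis on $g_j$ is \emph{superfluous} for every $j\ge1$, since $g_j$ is then in the range of $\mathcal L_\varepsilon$ and openness of $\{g_j>0\}$ is automatic.

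But under the literal $\cap$‑reading your dichotomy still has two gaps, only one of which you flag. Case~2 ($x_j\in\mathcal D_T$, hence $g_j$ continuous at $x_j$) needs $m\bigl(V\cap T^{-1}(B_\varepsilon(x_{j+1}))\bigr)>0$, and you correctly identify that this does not follow from the stated assumptions; without extra structure on $T$ near its discontinuities (e.g.\ piecewise continuity with $Tx_j$ equal to a one‑sided limit) this can fail. The second gap is in Case~1 at $j=0$: you write ``from the hypothesis $x_0\notin\mathcal D_f$'', but the $\cap$‑hypothesis permits $x_0\in\mathcal D_f$ as long as $x_0\notin\mathcal D_T$, and for $g_0=f$ (which need not be of the form $\mathcal L_\varepsilon w$) your preliminary remark doesn't apply, so $\{f>0\}$ need not contain a neighbourhood of $x_0$. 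Both gaps vanish under the $\cup$‑reading, under which your proof (preliminary remark plus continuity of $T$ at each $x_j$) is correct and even slightly sharper than the paper's. One more shared point worth recording: the assumption $f\ge 0$, which you state explicitly as ``WLOG'', is in fact a necessary hypothesis, not a harmless normalization — for signed $f$ the conclusion is false because cancellation can make $g_{j+1}(x_{j+1})\le 0$ even when $g_j(x_j)>0$. The paper leaves this tacit; it is harmless in the application (where $f=h_\varepsilon-\hat h\ge 0$), but it belongs in the statement of the lemma.
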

\begin{proof}
Let $f\in\mathcal B$ and $\left\{x_i\right\}_{i=0}^N$ be an $\varepsilon$-pseudo-orbit, satisfying the assumptions of the lemma. In particular, suppose that for some fixed $0\leq i<N$,  $\mathcal{L}_{\varepsilon}^if(x_i)>0$. Then
$$
\mathcal{L}_{\varepsilon}^{i+1}f(x_{i+1})=\int_M \mathcal{L}_{\varepsilon}^if(y)p_{\varepsilon}(y, x_{i+1})\:dy.
$$
By the hypothesis (P2) we  have $x_{i+1}\in B_{\varepsilon}(Ty) \Rightarrow p_{\varepsilon}(y,x_{i+1})>0$ and by the preceding continuity assumptions  there exists $\delta>0$ such that $y\in B_{\delta}(x_i) \Rightarrow  \mathcal{L}_{\varepsilon}^if(y)>0$. But this $\delta$-neighborhood of $x_i$ can be made smaller in such a way that when $y$ belongs to it, $d(Ty,Tx_i)\le \frac{\varepsilon-d(Tx_i,x_{i+1})}{2}$ which implies that $x_{i+1}$ is $\varepsilon$-close to $Ty$.
Therefore for all the $y$ in this $\delta$-neighborhood (which is of positive Lebesgue measure), the integrand above is strictly positive and this finishes the proof of the Lemma.
\qed
\end{proof}
\begin{proof}[of Theorem \ref{MT1}]
We first show that for every least element $\tilde\Lambda$  there exists a neighborhood $\mathcal{U}_{\varepsilon}$ such that $T(\mathcal{U}_{\varepsilon})\subset \mathcal{U}_{\varepsilon}$.  We denote by $B_{\varepsilon}(A)$ the (open) $\varepsilon$-neighborhood of a set $A$, that is, $B_{\varepsilon}(A)=\{x\in M: d(x,A)<\varepsilon\}$.  We observe that even though a least element $\tilde\Lambda$ is forward invariant, the image of a ball of radius $\varepsilon$ centred at a point in $\tilde\Lambda$ may not be necessarily contained in $\tilde\Lambda$. However, this ball will surely be a subset of the open set $\mathcal{U}_{\varepsilon,1}:=B_{\varepsilon}(T\tilde\Lambda)$. We define inductively a family of nested open sets $\mathcal{U}_{\varepsilon,n}:=B_{\varepsilon}(T\mathcal{U}_{\varepsilon,n-1})$ and consider the open neighborhood $\mathcal{U}_{\varepsilon}$ of $\tilde\Lambda$ defined by $\mathcal{U}_{\varepsilon}:=\cup_{n=1}^{\infty}\mathcal{U}_{\varepsilon,n}$. This set is clearly forward invariant under $T$, and its closure is disjoint, for $\varepsilon$ small enough, from the supports of the other ergodic acim; otherwise, we can construct an $\varepsilon$ pseudo-orbit linking the least element to them.

Let ${\mathcal B}(\mathcal{U}_{\varepsilon}):=\{f\in\mathcal B| f \textnormal{ is supported on }\mathcal{U}_{\varepsilon}\}$. Since $\mathcal{L}_{\varepsilon}f(x)=\int_Mp_{\varepsilon}(y,x)f(y)dy$, the forward invariance of $\mathcal{U}_{\varepsilon}$ together with $x\in B_{\varepsilon}(Ty)$, insure that $\mathcal L_{\varepsilon}$ leaves the Banach space ${\mathcal B}(\mathcal{U}_{\varepsilon})$ invariant.
Then by applying on ${\mathcal B}(\mathcal{U}_{\varepsilon})$ the Lasota-Yorke inequality and successively the Ionescu-Tulcea-Marinescu spectral theorem, (see for instance \cite{BG, HH},) we obtain a fixed point $\mathcal L_{\varepsilon}h_{\varepsilon}=h_{\varepsilon}\in {\mathcal B}(\mathcal{U}_{\varepsilon})$: the measure $\mu_{\varepsilon}=h_{\varepsilon}m$, $\int_{\mathcal{U}_{\varepsilon}}h_{\varepsilon}dm=1$ is therefore stationary.

We now prove that $h_{\varepsilon}$ is the only fixed point of $\mathcal{L}_{\varepsilon}$ in ${\mathcal B}(\mathcal{U}_{\varepsilon})$. Suppose there is another function $h_{\varepsilon}'\in {\mathcal B}(\mathcal{U}_{\varepsilon}))$ with the same property, and let us define the function $\hat{h}=\min(h_{\varepsilon},h_{\varepsilon}')$. Then clearly:
$\min(h_{\varepsilon},h_{\varepsilon}')\leq h_{\varepsilon}$ and $\min(h_{\varepsilon},h_{\varepsilon}')\leq h_{\varepsilon}'$ and thus $\mathcal{L}_{\varepsilon}(\min(h_{\varepsilon},h_{\varepsilon}'))\leq \mathcal{L}_{\varepsilon}h_{\varepsilon}$ and $\mathcal{L}_{\varepsilon}(\min(h_{\varepsilon},h_{\varepsilon}'))\leq \mathcal{L}_{\varepsilon}h_{\varepsilon}'$ which implies $\mathcal{L}_{\varepsilon}(\min(h_{\varepsilon},h_{\varepsilon}'))\leq \min(\mathcal{L}_{\varepsilon}h_{\varepsilon},\mathcal{L}_{\varepsilon}h_{\varepsilon}')$. But
$\min(\mathcal{L}_{\varepsilon}h_{\varepsilon},\mathcal{L}_{\varepsilon}h_{\varepsilon}')=\min(h_{\varepsilon},h_{\varepsilon}')=\hat{h}$, so $\mathcal{L}_{\varepsilon}\hat{h}\leq \hat{h}$ and therefore $\mathcal{L}_{\varepsilon}\hat{h}=\hat{h}$. Let us consider $h_{\varepsilon}-\hat{h}$. It is a nonnegative function and it satisfies $\mathcal{L}_{\varepsilon}(h_{\varepsilon}-\hat{h})=h_{\varepsilon}-\hat{h}$. By Proposition \ref{prop1} and by taking $\varepsilon$ small enough, we insure that the supports of $h_{\varepsilon}$ and $h'_{\varepsilon}$ will intersect the least element in a Borel set $B$ of positive Lebesgue measure. Starting from almost any point in this set we can attain any other point in $\mathcal{U}_{\varepsilon}$ with a (finite) ${\varepsilon}$-pseudo-obit \footnote{The ergodicity insures this possibility for points in the same ergodic component; the equivalence relation allows to pass from one representative to the other in the least element, and finally the recursive construction of $\mathcal{U}_{\varepsilon}$ allows to get the external points $\mathcal{U}_{\varepsilon}/\Lambda$.}. Take any $x_0\in B\setminus (\mathcal{D}_{h_{\varepsilon}-h}\cap\mathcal{D}_T)$ such that $(h_{\varepsilon}-\hat{h})(x_0)>0$.  For any point $x\in \mathcal{U}_{\varepsilon}\setminus (\mathcal{D}_{h_{\varepsilon}-h}\cap\mathcal{D}_T)$ there is an ${\varepsilon}$-pseudo-orbit $\left\{x_i\right\}_{i=0}^{N}\subset M\setminus (\mathcal{D}_{h_{\varepsilon}-h}\cap\mathcal{D}_T)$ which starts from $x_0$ and lands at $x=x_N$.  Hence we can apply Lemma \ref{pos} with $f=h_{\varepsilon}-\hat{h}$ to get $\mathcal{L}_{\varepsilon}^k(h_{\varepsilon}-\hat{h})(x_k)=(h_{\varepsilon}-\hat{h})(x_k)>0$, $k=1,\cdots ,N$. This implies that  $(h_{\varepsilon}-\hat{h})(x)>0$ for all $x\in \mathcal{U}_{\varepsilon}\setminus (\mathcal{D}_{h_{\varepsilon}-h}\cap\mathcal{D}_T)$, that is  $h_{\varepsilon}>h_{\varepsilon}'$ almost everywhere on $\mathcal{U}_{\varepsilon}$, contradicting the fact that $\int h_{\varepsilon}dm=\int h_{\varepsilon}'dm=1$. Therefore $h_{\varepsilon}=h_{\varepsilon}'$ almost everywhere on $\mathcal{U}_{\varepsilon}$.  We get part 1 of the theorem.

Now we prove part 2.  Suppose $\tilde{\Lambda}_0$ is not a least element.  We will show that $h_{\varepsilon}'=0$ on $\tilde{\Lambda}_0$ for the density $h_{\varepsilon}'$ be the density of any acsm.  We proceed by contradiction.  In this case we have $\tilde{\Lambda}_0\rightarrow\tilde{\Lambda}$ for some least element $\tilde{\Lambda}$, if otherwise $\tilde{\Lambda}_0$ would be a least element itself. So there is an $\varepsilon$-pseudo-orbit which starts from $\tilde{\Lambda}_0$ and ends up in $\tilde{\Lambda}$.  Let $h_{\varepsilon}$ denote the density of the unique acsm supported on $\tilde\Lambda$. If $h'_{\varepsilon}\not= 0$, we can invoke again the arguments of Lemma \ref{pos} and part 1 of this theorem to conclude that $h_{\varepsilon}>0$ on $\tilde{\Lambda}$ too and also that $\min(h_{\varepsilon},h_{\varepsilon}')$ is a fixed point of $\mathcal{L}_{\varepsilon}$. But the support of such a minimum is a subset of $\mathcal{U}_{\varepsilon}$ since for $\varepsilon$ small enough, $h_{\varepsilon}=0$ outside $\mathcal{U}_{\varepsilon}$.  Then by uniqueness of the density $h_{\varepsilon}$ over $\mathcal{U}_{\varepsilon}$ we get that $h_{\varepsilon}=\min(h_{\varepsilon},h_{\varepsilon}')$. This implies that $\int_Mh_{\varepsilon}'dm>1$, which is false.
\qed
\end{proof}

\begin{proof}[of Corollary 2]
By Proposition 1 and part~2 of Theorem~\ref{MT1}, for $\varepsilon$ small enough, the support of any ergodic acsm $\mu^{\varepsilon}_1$ intersects the support of a least element. By part~1 of Theorem~\ref{MT1} a small neighborhood of this least element supports an ergodic acsm  $\mu^{\varepsilon}_2$. By repeating the arguments of the proof of part~2 of Theorem~\ref{MT1} we obtain that those two ergodic acsm must coincide. Finally the unicity of the least element inside the support of $\mu^{\varepsilon}_1$  follows from the fact that two disjoints least elements are at  strictly positive distance and therefore they cannot share the same ergodic acsm.
\qed
\end{proof}
\section{Pseudo-orbits and metastability}\label{meta}
An ergodic dynamical system is said to be {\em metastable} if it possesses regions in its phase space that remain close to invariant for long periods of time. A well-known approach for detecting such a behaviour is by proving that the corresponding Perron-Frobenius operator\footnote{In our setting, since we assume that $\mathcal L_{\varepsilon}$ satisfies (RLY), the operator $\mathcal L_{\varepsilon}$ is quasi-compact on $\mathcal B$; i.e., $\exists$ an $r\in(\eta, 1)$ such that outside a ball centred at zero and of radius $r$, the operator $\mathcal L_{\varepsilon}$, as an operator on $\mathcal B$, has only discrete spectrum.} has a subdominant real eigenvalue $\xi_{\varepsilon}$. Then the positive and negative parts of the eigenfunction corresponding to $\xi_{\varepsilon}$ can be used to identify sets which remain close to invariant for long periods of time. Such sets are often called {\em almost invariant sets}. For more information on almost invariant sets we refer the reader to \cite{FS0} and references therein. An analogous theory also exists in the framework of non-autonomous dynamical systems, where the analogous sets are called {\em coherent structures} (see for instance \cite{FS} and references therein).

\bigskip

\noindent In this section we assume that:\\
\noindent (M1) As an operator on $\mathcal B(M)$, $\mathcal L$ has $1$ as an eigenvalue of multiplicity two. Moreover, if $\lambda\not= 1$ is an eigenvalue of $\mathcal L$, then $|\lambda|<1$.\\
\noindent (M2) The map $T$ has a unique least element.\\

Under conditions (M1) and (M2), we will show that random perturbations of $T$ exhibit a \textit{metastable behavior}. In particular, we will show that $\mathcal L_{\varepsilon}$, as an operator on $\mathcal B$, will have $1$ as a \textit{simple} eigenvalue and will have another real eigenvalue $\xi_{\varepsilon}$ close to $1$. Moreover $\xi_{\varepsilon}$ has the second largest modulus among eigenvalues of $\mathcal L_{\varepsilon}$. Such a $\xi_{\varepsilon}$ determines the rate of mixing \cite{Ba} of the random system $(\mathcal{X}^{\varepsilon}_n)$.\\

 For this purpose, we first introduce some notation and recall the Keller-Liverani perturbation theorem \cite{KL1}. We adapt it to our situation which deals with
  the two adapted norms $||\cdot||_{\mathcal B}=|\cdot|+||\cdot||_1$ and $||\cdot||_1$. For the unperturbed Perron-Frobenius operator ${\mathcal L}$ let us
consider the set
$$V_{\delta,r}({\mathcal L})=\{z\in C: |z|\le r\textnormal{ or dist}(z,\sigma({\mathcal L}))\le\delta\},$$
where $\sigma({\mathcal L})$ is the spectrum of ${\mathcal L}$ as an operator on $\mathcal B$.
Further, for $\varepsilon\ge 0$, we define the following operator norm
\begin{equation}\label{3norm}
|||{\mathcal L}_{\varepsilon}|||=\sup_{||f||_{\mathcal B}\le 1}||{\mathcal L}_{\varepsilon}f||_1.
\end{equation}
Conditions (LY) and (RLY) are necessary for the operators $\mathcal L$, $\mathcal L_{\varepsilon}$ to satisfy the assumptions \cite{KL1}. Thus, we are ready to state and use the following important result of \cite{KL1}:
\begin{theorem}\cite{KL1}\label{ThKL99}
If $\lim_{\varepsilon\to 0}|||{\mathcal L}_{\varepsilon}-{\mathcal L}|||=0$  then
for sufficiently small $\varepsilon>0$, $\sigma({\mathcal L}_{\varepsilon})\subset
V_{\delta,r}({\mathcal L})$. Moreover, in each connected component
of $V_{\delta,r}({\mathcal L})$ that does not contain $0$ both
$\sigma({\mathcal L})$ and $\sigma({\mathcal L}_{\varepsilon})$ have the same
multiplicity; i.e., the associated spectral projections have the
same rank.
\end{theorem}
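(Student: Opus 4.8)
The plan is to prove Theorem~\ref{ThKL99} by the resolvent comparison method of Keller and Liverani, using only the uniform Lasota--Yorke inequalities (LY), (RLY) and the hypothesis $\tau(\varepsilon):=|||\mathcal L_\varepsilon-\mathcal L|||\to 0$. First I would iterate (LY) and (RLY): since $\|\mathcal L\|_1\le 1$ and $\|\mathcal L_\varepsilon\|_1\le 1$, one gets $|\mathcal L^n f|\le\eta^n|f|+\tfrac{D}{1-\eta}\|f\|_1$ and the same bound for $\mathcal L_\varepsilon$ with constants uniform in $\varepsilon$. By the Ionescu--Tulcea--Marinescu / Hennion theorem this makes $\mathcal L$ and every $\mathcal L_\varepsilon$ quasi-compact on $\mathcal B$ with essential spectral radius $\le\eta<r$; hence outside $\overline{B(0,r)}$ each of these operators has only finitely many eigenvalues, each of finite algebraic multiplicity, and proving the theorem amounts to locating those of $\mathcal L_\varepsilon$ inside $V_{\delta,r}(\mathcal L)$ and matching, component by component, the rank of the corresponding spectral projections with those of $\mathcal L$.

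The technical core is a uniform resolvent estimate. Fix $z$ with $|z|>r$ and $\operatorname{dist}(z,\sigma(\mathcal L))>\delta$, so that $R(z):=(z-\mathcal L)^{-1}$ is bounded on $\mathcal B$ with a bound depending only on $r$ and $\delta$. Applying (RLY) to $\mathcal L_\varepsilon R_\varepsilon(z)f=zR_\varepsilon(z)f-f$ gives, whenever $R_\varepsilon(z):=(z-\mathcal L_\varepsilon)^{-1}$ exists, $(|z|-\eta)|R_\varepsilon(z)f|\le|f|+D\|R_\varepsilon(z)f\|_1$, so that $\|R_\varepsilon(z)\|_{\mathcal B\to\mathcal B}$ is controlled by $1+|||R_\varepsilon(z)|||$; it therefore suffices to bound the mixed norm $|||R_\varepsilon(z)|||$. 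For this I would iterate the resolvent identity $R_\varepsilon=R+R_\varepsilon(\mathcal L_\varepsilon-\mathcal L)R$ a finite number $N$ of times, $R_\varepsilon=\sum_{k=0}^{N-1}R\,[(\mathcal L_\varepsilon-\mathcal L)R]^k+R_\varepsilon\,[(\mathcal L_\varepsilon-\mathcal L)R]^N$, and balance the two norms: each occurrence of $(\mathcal L_\varepsilon-\mathcal L)$ gains a factor $\tau(\varepsilon)$ when its input is measured in $\|\cdot\|_{\mathcal B}$ and its output in $\|\cdot\|_1$, while $\mathcal L_\varepsilon$, $\mathcal L$ and $R$ are uniformly bounded on $\mathcal B$. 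Choosing $N\asymp\log(1/\tau(\varepsilon))$ and absorbing the order-$N$ remainder into the uniform quasi-compact bound yields $|||R_\varepsilon(z)-R(z)|||\le C\,\tau(\varepsilon)^{\alpha}$ for some $\alpha\in(0,1)$, uniformly for $z$ in compact subsets of $\{|z|>r\}\setminus\{\,w:\operatorname{dist}(w,\sigma(\mathcal L))\le\delta\,\}$ (the part with $|z|$ large being handled directly by a Neumann series). In particular $R_\varepsilon(z)$ exists on that set for $\varepsilon$ small, i.e.\ $\sigma(\mathcal L_\varepsilon)\subset V_{\delta,r}(\mathcal L)$.

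Now fix a connected component $W$ of $V_{\delta,r}(\mathcal L)$ with $0\notin W$, and a rectifiable cycle $\gamma\subset\{|z|>r\}$ with $\operatorname{dist}(\gamma,\sigma(\mathcal L))>\delta$ that encircles exactly $\sigma(\mathcal L)\cap W$; for $\varepsilon$ small $\gamma$ then also avoids $\sigma(\mathcal L_\varepsilon)$ and encircles exactly $\sigma(\mathcal L_\varepsilon)\cap W$. Put $\Pi=\frac{1}{2\pi i}\oint_\gamma R(z)\,dz$ and $\Pi_\varepsilon=\frac{1}{2\pi i}\oint_\gamma R_\varepsilon(z)\,dz$: these are the associated spectral projections, of finite rank on $\mathcal B$, and integrating the previous estimate along $\gamma$ gives $|||\Pi_\varepsilon-\Pi|||\le C'\tau(\varepsilon)^{\alpha}\to 0$. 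It remains to turn this mixed-norm smallness into $\operatorname{rank}\Pi_\varepsilon=\operatorname{rank}\Pi$. For $\operatorname{rank}\Pi_\varepsilon\ge\operatorname{rank}\Pi=:k$, take a basis $f_1,\dots,f_k$ of $\operatorname{range}\Pi$; since $\|\Pi_\varepsilon f_j-f_j\|_1=\|(\Pi_\varepsilon-\Pi)f_j\|_1\le C'\tau(\varepsilon)^\alpha\|f_j\|_{\mathcal B}\to 0$ and the $f_j$ form a fixed, $\|\cdot\|_1$-linearly independent family, the vectors $\Pi_\varepsilon f_1,\dots,\Pi_\varepsilon f_k\in\operatorname{range}\Pi_\varepsilon$ stay linearly independent for $\varepsilon$ small. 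For the reverse inequality I would show $\Pi$ is injective on $\operatorname{range}\Pi_\varepsilon$: if $g\in\operatorname{range}\Pi_\varepsilon$ with $\Pi g=0$, then $(\Pi_\varepsilon-\Pi)g=\Pi_\varepsilon g=g$, so $\|g\|_1\le C'\tau(\varepsilon)^{\alpha}\|g\|_{\mathcal B}$; combined with a uniform-in-$\varepsilon$ lower bound $\|g\|_1\ge c\,\|g\|_{\mathcal B}$ on $\operatorname{range}\Pi_\varepsilon$ (from the uniform (RLY): a vector in the peripheral spectral subspace cannot be small in the weak norm, since iterating $\mathcal L_\varepsilon$ keeps it large in $\|\cdot\|_{\mathcal B}$ while (RLY) would then force its $\|\cdot\|_1$-norm up), this forces $g=0$ once $C'\tau(\varepsilon)^{\alpha}<c$. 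Hence $\operatorname{rank}\Pi_\varepsilon\le k$, and the two ranks coincide, which is the assertion of the theorem.

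The step I expect to be the main obstacle is the uniform mixed-norm resolvent bound in the second paragraph. One cannot simply compose $R_\varepsilon(z)$, which is controlled only as an operator on $(\mathcal B,\|\cdot\|_{\mathcal B})$, with $(\mathcal L_\varepsilon-\mathcal L)$, whose smallness is visible only as a map $(\mathcal B,\|\cdot\|_{\mathcal B})\to(\mathcal B,\|\cdot\|_1)$: the gain of $\tau(\varepsilon)$ and the boundedness of $R_\varepsilon(z)$ live in incompatible norms. The way through is the quantitative trade-off between the two norms built into the uniform Lasota--Yorke inequality --- truncating the Neumann-type expansion at order $N\asymp\log(1/\tau(\varepsilon))$ and absorbing the tail via the uniform quasi-compact bound --- which is exactly what produces the fractional exponent $\alpha$; a similarly quantitative use of (RLY), rather than a soft dimension count, is what underlies the uniform lower bound on $\operatorname{range}\Pi_\varepsilon$ needed for the $\operatorname{rank}\Pi_\varepsilon\le\operatorname{rank}\Pi$ direction.
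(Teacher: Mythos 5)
The paper itself gives no proof of this statement: it is quoted verbatim from Keller--Liverani \cite{KL1}, and all the technical work is delegated to that reference. Your proposal is therefore best read as a blind reconstruction of the Keller--Liverani argument, and at the level of overall architecture it is faithful: uniform (LY)/(RLY) gives uniform quasi-compactness with essential spectral radius $\le\eta<r$; a mixed-norm resolvent estimate $|||R_\varepsilon(z)-R(z)|||\le C\tau(\varepsilon)^\alpha$ uniform on $\{|z|>r\}\setminus B_\delta(\sigma(\mathcal L))$ gives spectral inclusion; contour integrals give projections $\Pi,\Pi_\varepsilon$ with $|||\Pi_\varepsilon-\Pi|||$ small; and the rank comparison is done in two directions, the harder one ($\operatorname{rank}\Pi_\varepsilon\le\operatorname{rank}\Pi$) needing a uniform-in-$\varepsilon$ equivalence of $\|\cdot\|_1$ and $\|\cdot\|_{\mathcal B}$ on $\operatorname{range}\Pi_\varepsilon$, which ultimately comes from the uniform Lasota--Yorke bound. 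These are exactly the moving parts of the cited proof.

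One concrete repair is needed in your second paragraph. The iterated identity $R_\varepsilon=\sum_{k=0}^{N-1}R\,[(\mathcal L_\varepsilon-\mathcal L)R]^k+R_\varepsilon\,[(\mathcal L_\varepsilon-\mathcal L)R]^N$ cannot be balanced in the way you describe: once you spend the $\tau(\varepsilon)$-gain by measuring the output of $(\mathcal L_\varepsilon-\mathcal L)$ in $\|\cdot\|_1$, the next $R$ has an $L^1$ input, and $R(z)$ is not bounded from $L^1$ into anything (it is controlled only on $\mathcal B$). You flag this tension yourself at the end, and the cure you gesture at is indeed the right one, but the concrete expansion must be different. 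Keller--Liverani do not expand in powers of $(\mathcal L_\varepsilon-\mathcal L)R$; they use the truncated Neumann sums $R(z)=\sum_{k<N}z^{-k-1}\mathcal L^k+z^{-N}\mathcal L^N R(z)$ and likewise for $\mathcal L_\varepsilon$, then compare power by power via $\mathcal L_\varepsilon^k-\mathcal L^k=\sum_{j}\mathcal L_\varepsilon^{j}(\mathcal L_\varepsilon-\mathcal L)\mathcal L^{k-1-j}$. This arrangement keeps the $\tau(\varepsilon)$-gain on the inside (between $\|\mathcal L_\varepsilon^j\|_1\le 1$ on the left and $\|\mathcal L^{k-1-j}\|_{\mathcal B}\le C$ on the right), giving $|||\mathcal L_\varepsilon^k-\mathcal L^k|||\le kC\tau(\varepsilon)$, and it is the summation of these terms against $|z|^{-k-1}$, truncated at $N\asymp\log(1/\tau(\varepsilon))$ and with the remainder absorbed via the uniform quasi-compact bound, that actually produces the fractional exponent $\alpha$. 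With that substitution your sketch is a correct account of the cited proof.
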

Using Theorem \ref{ThKL99}, we show that our random system $(\mathcal X_{\varepsilon}^n)$ exhibits metastable behavior:
\begin{proposition}\label{mixing}
Suppose that
\begin{itemize}
\item $T$ satisfies assumptions (M1), (M2);
\item $\lim_{\varepsilon\to 0} |||\mathcal L-\mathcal L_{\varepsilon}|||=0$.
\end{itemize}
Then, as an operator on $\mathcal B$, $\mathcal L_{\varepsilon}$ has $1$ as a simple eigenvalue. Moreover, $\mathcal L_{\varepsilon}$ has a real eigenvalue $\xi_{\varepsilon}$ very close to $1$. In particular, $\xi_{\varepsilon}$ has the second largest modulus among eigenvalues of $\mathcal L_{\varepsilon}$.
\end{proposition}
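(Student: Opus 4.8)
The plan is to apply the Keller--Liverani perturbation theorem (Theorem \ref{ThKL99}) to transfer the spectral picture of $\mathcal L$ guaranteed by (M1) to $\mathcal L_{\varepsilon}$, and then to use the combinatorial/positivity results of Section \ref{pf} to pin down that the two-dimensional fixed space of $\mathcal L$ splits, under perturbation, into a simple eigenvalue $1$ and a nearby real eigenvalue $\xi_\varepsilon$.

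First I would record the spectral consequences of (M1). Since $1$ is an eigenvalue of $\mathcal L$ of multiplicity two and the rest of the spectrum lies strictly inside the unit disc, I can choose $r\in(\eta,1)$ and $\delta>0$ so small that the disc of radius $\delta$ around $1$ contains no other point of $\sigma(\mathcal L)$ and so that $\{|z|\le r\}$ contains $\sigma(\mathcal L)\setminus\{1\}$; thus $V_{\delta,r}(\mathcal L)$ has a connected component $\{|z-1|\le\delta\}$ not containing $0$, on which the spectral projection of $\mathcal L$ has rank two. By hypothesis $\lim_{\varepsilon\to0}|||\mathcal L_\varepsilon-\mathcal L|||=0$, and (LY)--(RLY) are exactly the uniform Lasota--Yorke bounds needed for the hypotheses of \cite{KL1}; hence for $\varepsilon$ small enough $\sigma(\mathcal L_\varepsilon)\subset V_{\delta,r}(\mathcal L)$ and the spectral projection of $\mathcal L_\varepsilon$ associated to the component $\{|z-1|\le\delta\}$ has rank two, while all of $\sigma(\mathcal L_\varepsilon)\setminus\{|z-1|\le\delta\}$ lies in $\{|z|\le r\}$. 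So $\mathcal L_\varepsilon$ has exactly two eigenvalues (with multiplicity) in $\{|z-1|\le\delta\}$, and these are the two of largest modulus.

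Next I would argue that $1$ itself is a simple eigenvalue of $\mathcal L_\varepsilon$. That $1$ is an eigenvalue is clear: (RLY) plus quasi-compactness forces existence of a fixed density, i.e.\ an acsm. That its algebraic multiplicity is exactly one is where (M2) enters: by Corollary \ref{co1}, since $T$ has a unique least element (and, by (M1) together with the structure recalled in Section \ref{main}, the fixed space of $\mathcal L$ is spanned by two ergodic acims, so there are finitely many least elements — here just one), for $\varepsilon$ small the random system has a \emph{unique} ergodic acsm; hence the geometric multiplicity of the eigenvalue $1$ of $\mathcal L_\varepsilon$ is one. To upgrade geometric to algebraic simplicity I would use the standard positivity/cone argument for quasi-compact transfer operators (as in \cite{BG, HH}): a Jordan block at $1$ would produce a density $g$ with $\mathcal L_\varepsilon g = g + h_\varepsilon$, $h_\varepsilon$ the fixed density, forcing $\|\mathcal L_\varepsilon^n g\|_1\to\infty$, impossible since $\mathcal L_\varepsilon$ is a Markov (norm-nonincreasing) operator on $L^1_m$. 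Therefore $1$ is a simple eigenvalue of $\mathcal L_\varepsilon$, and consequently the \emph{second} eigenvalue in $\{|z-1|\le\delta\}$, call it $\xi_\varepsilon$, is also simple and lies within $\delta$ of $1$; shrinking $\delta$ with $\varepsilon$ (which Theorem \ref{ThKL99} permits, since $\delta$ may be taken arbitrarily small at the cost of taking $\varepsilon$ smaller) gives $\xi_\varepsilon\to1$. Since every other point of $\sigma(\mathcal L_\varepsilon)$ has modulus $\le r<1<|\xi_\varepsilon|$-close-to-$1$, $\xi_\varepsilon$ has the second-largest modulus. Finally $\xi_\varepsilon$ is real: $\mathcal L_\varepsilon$ is a real (positivity-preserving) operator, so its spectrum is symmetric under conjugation; if $\xi_\varepsilon$ were non-real then $\bar\xi_\varepsilon$ would be a third eigenvalue in $\{|z-1|\le\delta\}$, contradicting rank two.

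The main obstacle I expect is the verification that the eigenvalue $1$ of $\mathcal L_\varepsilon$ is algebraically (not merely geometrically) simple, and — relatedly — ruling out that the two eigenvalues in the small disc coalesce into a single eigenvalue of multiplicity two at, or symmetrically about, $1$. The geometric statement is handed to us by Corollary \ref{co1}; the algebraic refinement and the reality of $\xi_\varepsilon$ both rest on the same ingredient, namely that $\mathcal L_\varepsilon$ is a genuine Markov operator preserving positivity and $L^1_m$-norm of densities, which prevents Jordan blocks at the peripheral eigenvalue and forces conjugation-symmetry of the residual spectrum. One should also be slightly careful that the component $\{|z-1|\le\delta\}$ of $V_{\delta,r}(\mathcal L)$ is chosen once (with its $\delta$) and that $\xi_\varepsilon$ is then tracked as $\varepsilon\to0$; the cleanest bookkeeping is to fix $\delta$, obtain the rank-two statement, and then note that because $1$ is simple for $\mathcal L_\varepsilon$, the complementary rank-one spectral projection varies continuously, so its eigenvalue $\xi_\varepsilon$ depends continuously on $\varepsilon$ and converges to the point of $\sigma(\mathcal L)$ it came from, which is $1$.
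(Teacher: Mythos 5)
Your proof follows essentially the same route as the paper's: (M1) fixes the spectral picture of $\mathcal L$, (M2) together with Theorem~\ref{MT1} yields a unique ergodic acsm and hence simplicity of the eigenvalue $1$ for $\mathcal L_\varepsilon$, and Theorem~\ref{ThKL99} then transfers the rank-two spectral projection near $1$, forcing a second real eigenvalue $\xi_\varepsilon$ close to $1$ with the second-largest modulus. The paper's argument is terser, asserting the simplicity of $1$ and the reality of $\xi_\varepsilon$ without elaboration; your Jordan-block argument for algebraic simplicity and your conjugation-symmetry argument for reality are correct and simply make explicit what the paper leaves implicit.
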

\begin{proof}
Assumption (M1) states that the spectrum of
$\mathcal L$, as an operator on $\mathcal B$, satisfies the following:
$\exists$ an $r\in(\eta, 1)$\footnote{By (LY) and (RLY), $\eta$ is an upper bound on
the essential spectral radius of $\mathcal L$ and the essential spectral radius of
$\mathcal L_{\varepsilon}$.} and a $\delta>0$ such
that:
\begin{enumerate}
\item The eigenvalue 1 of $\mathcal L$ is of multiplicity two;
\item if $\lambda_i\not= 1$ is an eigenvalue of $\mathcal L$, then $\lambda_i\in B(0,r)$;
\item $B(0,r)\cap B(1,\delta)=\emptyset$.
\end{enumerate}
Moreover, under assumptions (M2), using Theorem \ref{MT1}, the random map
$(\mathcal X_{\varepsilon}^n)$ has exactly one ergodic acsm; i.e., as an
operator on $\mathcal B$, $\mathcal L_{\varepsilon}$ has 1 as a \textit{simple}
eigenvalue. Consequently, by Theorem \ref{ThKL99}, for sufficiently
small $\varepsilon$, the spectrum of $\mathcal L_{\varepsilon}$ satisfies the
following:
\begin{enumerate}
\item $\mathcal L_{\varepsilon}$ has a real eigenvalue $\xi_{\varepsilon}<1$, with $\xi_{\varepsilon}\in B(1,\delta)$;
\item if $\lambda_{i,\varepsilon}\notin\{1,\xi_{\varepsilon}\}$ is an eigenvalue of $\mathcal L_{\varepsilon}$, then $\lambda_{i,\varepsilon}\in B(0,r)$.\qed
\end{enumerate}
\end{proof}

\begin{remark}The condition $\lim_{\varepsilon\to 0} |||\mathcal L-\mathcal L_{\varepsilon}|||=0$ of Proposition \ref{mixing} can be checked in several cases. A general theorem is presented in
Lemma 8 of \cite{BaYo} for piecewise expanding maps of the interval endowed with our pair of adapted spaces where the noise is represented by a convolution kernel. In the multidimensional case, using quasi-H\"older spaces, the proof is given in Proposition 4.3 of \cite{AFV}. It should be noted that the previous result implies that the non-essential spectrum of $\mathcal L$ is stable \cite{KL1}.
\end{remark}
\begin{remark}
The technique followed to prove Proposition \ref{mixing} does not work when the number of ergodic $T$-acim is $l> 2$. This is due to the fact that\\
a) If $l$ is odd, then $l-1$ is even. Therefore, the transfer operator $\mathcal L_{\varepsilon}$ may have $l-1$ complex eigenvalues of modulus one sitting in $B(1,\delta)$.\\
b) If $l$ is even, then $l-2$ is even. Therefore, the transfer operator $\mathcal L_{\varepsilon}$ may have $l-2$ complex eigenvalues of modulus one sitting in $B(1,\delta)$.
\end{remark}
Whether Proposition \ref{mixing} is true or not for $l>2$ is an interesting question.
\section{Random Transformations}\label{comp}
\begin{center}
\begin{figure}\label{fig1}
 \hskip -1cm
 \includegraphics[width=4in]{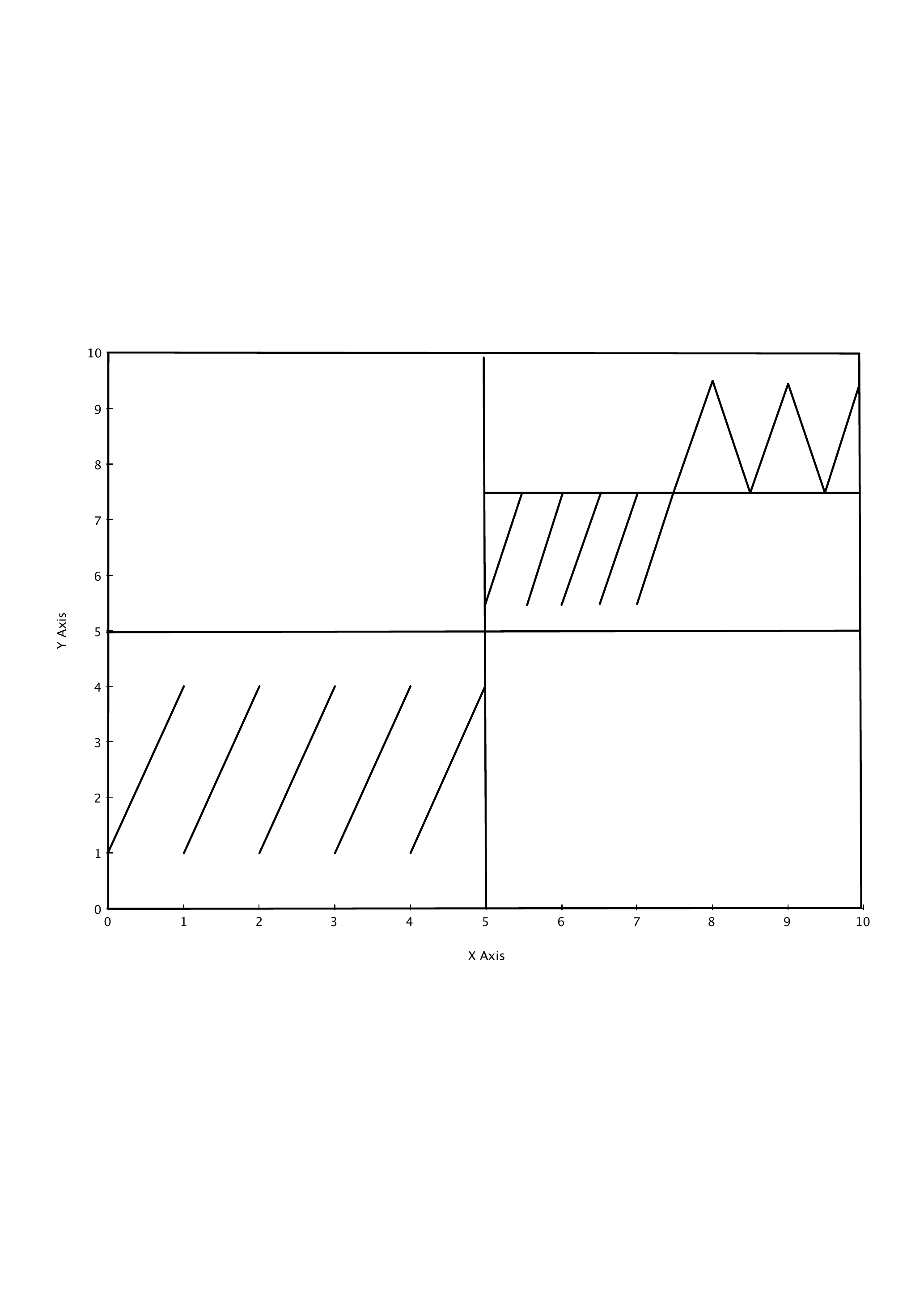}
 \vskip -4cm
   \caption{An example of a $1-$dimensional map $T$ with two least elements}
\end{figure}
\end{center}
In sections \ref{main}, \ref{pf} and \ref{meta} we studied random perturbations of a dynamical system in the framework of general Markov processes. Nevertheless, it is often useful to deal with the case  when the Markov process is generated by \textit{random transformations} \cite{K}. In this setting, we consider an i.i.d.  stochastic process $(\omega_k)_{k\in N}$ with values in $\Omega_{\varepsilon}$ and with probability distribution $\theta_{\varepsilon}$. We associate with each $\omega\in\Omega_{\varepsilon}$ a map $T_{\omega}: M\to M$ and we consider the random orbit starting from the point $x$ and generated by the realization $\underline{\omega}_n=(\omega_1,\omega_2,\cdots,\omega_n)$, defined as : $T_{\underline{\omega}_n}:=T_{\omega_n}\circ\cdots\circ T_{\omega_1}(x)$. This defines a Markov process  $\mathcal X_{\varepsilon}$ with transition function
\begin{equation}\label{gre}
 P (x, A)=\int_{\Omega_{\varepsilon}}\mathbf{1}_{A}(T_{\omega}(x))d\theta_{\varepsilon}(\omega),
 \end{equation}
where $A\in\mathcal B(M)$, $x\in M$ and $\mathbf{1}_{A}$ is the indicator function of a set $A$. The transition function  induces an operator ${\mathcal U}_{\varepsilon}^*$ which acts on measures $\mu$ on $(I,\mathcal B(M))$ as:
$${\mathcal U}_{\varepsilon}^*\mu(A)=\int_{M}\int_{\Omega_{\varepsilon}}\mathbf{1}_{A}(T_{\omega}(x))d\theta_{\varepsilon}(\omega)d\mu(x)=\int_{I}{\mathcal U}_{\varepsilon}{\bf 1}_A(x)d\mu_{\varepsilon}(x),$$ where ${\mathcal U}_{\varepsilon}$ is the random evolution operator acting on $L^{\infty}_m$ functions $g$:
\begin{equation}\label{koop}
{\mathcal U}_{\varepsilon}g=\int_{\Omega_{\varepsilon}}g\circ T_{\omega}d\theta_{\varepsilon}(\omega).
\end{equation}
A measure $\mu_{\varepsilon}$ on $(M,\mathcal B(M))$ is called a $\mathcal X_{\varepsilon}$-stationary measure  if and only if, for any $A\in\mathcal B(M)$,
\begin{equation}\label{stationary}
{\mathcal U}_{\varepsilon}^*\mu_{\varepsilon}(A)=\mu_{\varepsilon}(A).
\end{equation}
We are interested in studying $\mathcal X_{\varepsilon}$-acsm. By (\ref{koop}), one can define the transfer operator $\mathcal L_{\varepsilon}$ (Perron-Frobenius) acting on $L^{1}(M,\mathcal B(M), m)$ by:
\begin{equation}\label{RPF}
(\mathcal L_{\varepsilon} f)(x)=\int_{\Omega_{\varepsilon}}\mathcal L_{\omega} f (x)d\theta_{\varepsilon}(\omega),
\end{equation}
which satisfies the duality condition
\begin{equation}\label{du}
\int_M g \mathcal L_{\varepsilon} f dm = \int_M {\mathcal U}_{\varepsilon}g f dm
\end{equation}
where $g$ is in $L^{\infty}_m$ and $\mathcal L_{\omega}$ is the transfer operator associated with $T_{\omega}$.
It is well know that $\mu_{\varepsilon}:=\rho_{\varepsilon}m$ is a $\mathcal X_{\varepsilon}$-acsm if and only if $\mathcal L_{\varepsilon}\rho_{\varepsilon}=\rho_{\varepsilon}$; i.e., $\rho_{\varepsilon}$ is a $\mathcal X_{\varepsilon}$-invariant density. In order to use the results of sections \ref{main}, \ref{pf} and \ref{meta}, we also assume that assumptions (P1), (P2)\footnote{ (P1) and (P2) in this setting are analogous to:\\ (P1) For all $x\in I$ the measure $P(x,\cdot)$ defined above on the Borel subsets of $I$ by $ P(x,A)=\theta_{\varepsilon}\{\omega\in \Omega_{\varepsilon}\; ,\ T_{\omega}(x)\in A\}$ is absolutely continuous with respect to Lebesgue, namely we have a summable density $p_{\varepsilon}(x,\cdot)$ such that: $P(x,A)=\int_A p_{\varepsilon}(x,y)dy$;\\
(P2) We have:  support of $P(x,\cdot)$ coincides with $B_{\varepsilon}(Tx), \forall x\in I$.} (B1)-(B3), (LY) and (RLY) hold. Moreover, we assume that (\ref{RPF}) reduces to
\begin{equation}\label{red}
(\mathcal L_{\varepsilon} f)(x)=\int_{\Omega_{\varepsilon}}\mathcal L_{\omega} f (x)d\theta_{\varepsilon}(\omega)=\int_M p_{\varepsilon}(z,x)f(z)dz.
\end{equation}
In fact, an important example of a random perturbation where $\mathcal L_{\varepsilon}$ can be reduced as in (\ref{red}) is the case of {\em additive} noise.  For instance if $M=S^q$, the $q$-dimensional torus \footnote{If $M$ is not the torus we assume that, for all $\omega\in \Omega_{\varepsilon}$, $T_{\omega}(M)\subseteq M$.}, let define $T_{\omega}= T(x)-\omega$ mod $S^q$,
where $\omega\in S^q$. Let the density of $\theta_{\varepsilon}$ with respect to the Lebesgue measure $d\omega$
on $S^m$, $h_{\varepsilon}$, be continuously differentiable with support contained in the square
$\Omega_{\varepsilon}\equiv[-\varepsilon,\varepsilon]^q$: $\int d\theta_{\varepsilon}=\int h_{\varepsilon}(\omega)d\omega=1$. It is then straightforward to check that $p_{\varepsilon}(x,y)=
 h_{\varepsilon}(Tx-y)$.
 \begin{remark}\label{RR}
 The preceding example illustrates very well the relation between the two approaches used in this paper to deal with randomness. Namely the {\em Markov chain} approach, which was used to prove Theorem 1, and the {\em random transformations} approach which permits to follow the orbit of a point under the concatenation of the randomly chosen maps. Consequently, the latter allows for more explicit representation of objects like the evolution operator and the transfer operator. In fact, relation (\ref{red}) is a general fact whenever the transition function for the Markov chain is given by the integral (see (\ref{gre})), $P (x, A)=\int_{\Omega_{\varepsilon}}\mathbf{1}_{A}(T_{\omega}(x))d\theta_{\varepsilon}(\omega)$, and the noise is {\em absolutely continuous}, namely $\theta_{\epsilon}(\omega\in \Omega_{\epsilon}; \ T_{\omega}x\in A)=\int_A p_{\epsilon}(x,z)dz$, where $A$ is a measurable set in $M$. This in particular means that we can always construct a Markov chain starting with a random transformation. The converse is also true. We refer to  \cite{K} for the construction, and to \cite{JKR} for recent results in this direction\footnote{\cite{JKR} gives a representation of a local Markov chain perturbation by random diffeomorphisms close to the unperturbed one. It would be interesting to extend the work of \cite{JKR} to $C^2$ endomorphisms where the corresponding random expanding map satisfy (RLY).}. Finally, we stress that Theorem~1 has been proved for an absolutely continuous noise. This means that it cannot be applied, in its actual form, to random perturbations on a finite noise space, for which the probability $\theta_{\epsilon}$ is an atomic measure and the random transfer operator $\mathcal{L}_{\epsilon}$ applied to the function $f$ becomes a weighted operator of the form $\sum_{j\ge 1} \mathcal{L}_{\omega_j}\ f  \ p_{\omega_j}$, where the weights $p_{\omega_j}, \  \sum_{j\ge 1}p_{\omega_j}=1$, are associated to the values of the random variables $\omega_j$.
 \end{remark}
 \subsection{$1$-dimensional examples} To illustrate our results, we  present two simple examples of  $1$-dimensional maps. In these examples the Banach space $\mathcal B$ is considered to be the space of functions of bounded variation. In Example \ref{Ex1} we present a map that has three ergodic components and two least elements. 
 \begin{example}\label{Ex1}
 In this example $T:[0,10]\to[0,10]$. The graph of $T$ is shown in Fig. 1. $T$ is piecewise linear and Markov with respect to the partition: $$[0,1),\dots,[4,5), [5,5.5)\dots,[9.5,10].$$
 One can easily check  that $T$ has exactly three ergodic acim whose supports $\Lambda_1, \Lambda_2, \Lambda_3$ are, respectively, equal to $[1,4]$, $[5.5,7.5]$, and $[7.5,9.5]$. Moreover, one can easily check that $T$ admits two least elements. Namely, $\{\Lambda_1\}$ and $\{\Lambda_2,\Lambda_3\}$.
 \end{example}
 \subsection{ A $2$-dimensional example}
 \begin{center}
\begin{figure}
\hskip -1cm
\includegraphics[width=4in]{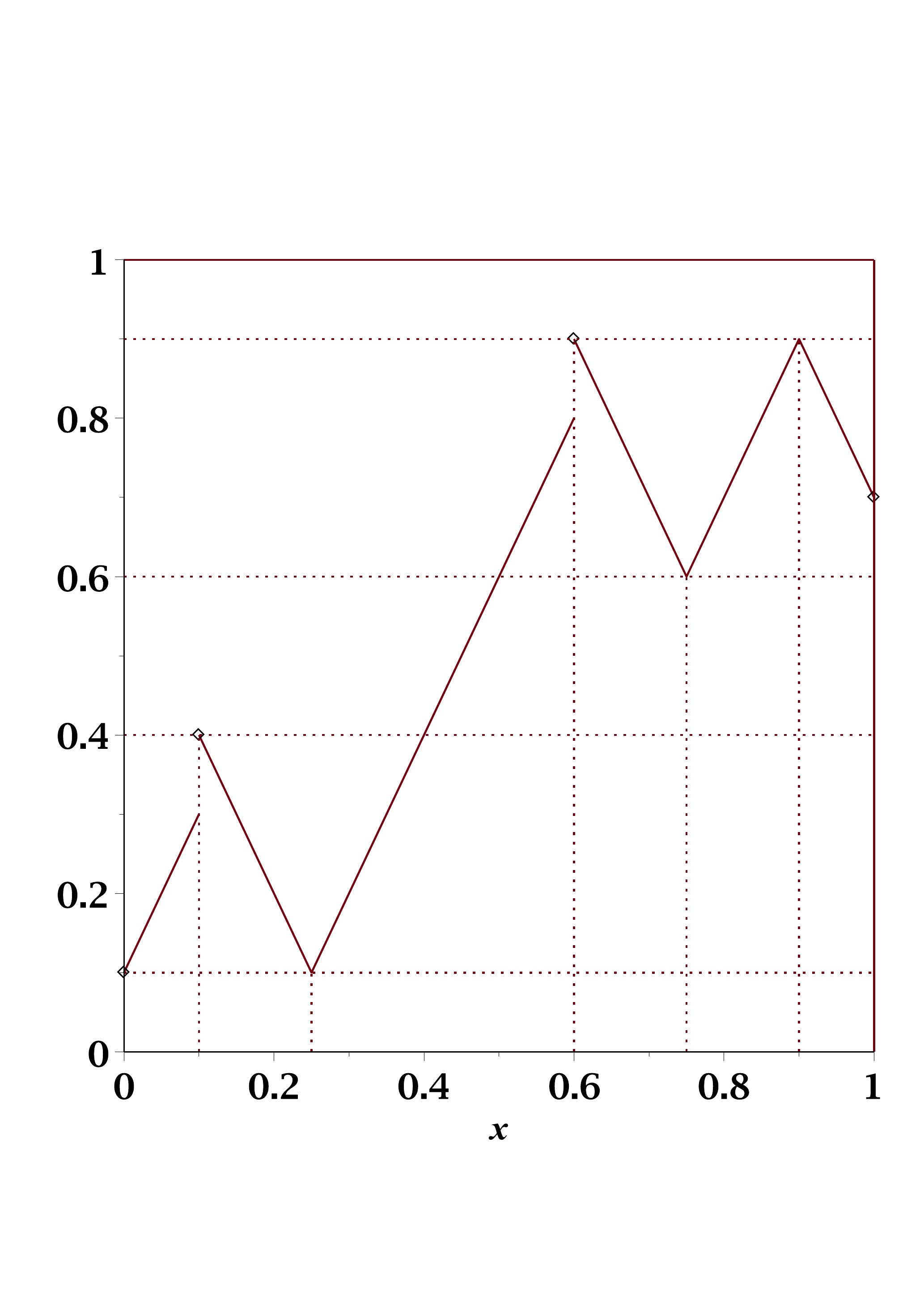}
 \vskip -2cm
   \caption{The graph of the map $T$ for $a=0.1$. The least element for $T$ is the closed interval $[\frac12+a, 1-a]$. This induces the least element $[\frac12+a, 1-a]\times S $ for $\Phi_0$ on $[0,1]\times S$.}
\end{figure}
\end{center}
\begin{center}
\begin{figure}\label{fig4}
 \hskip -1cm
 \includegraphics[width=6in]{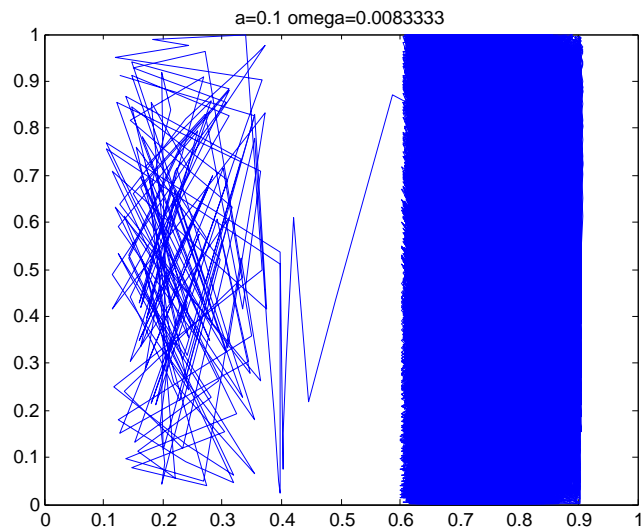}
  \caption{The least element on the right side $x>\frac12$ for the map $\Phi_{\omega}$; notice that $\omega$ on the top of the figure is just $\epsilon=1/120.$}
\end{figure}
\end{center}
 \begin{example}
 We now present an example in higher dimensions, in particular the two-dimensional skew system $\Phi_{\omega}: [0,1]\times S\rightarrow  [0,1]\times S $ defined as $(x',y')=\Phi_{\omega}(x,y)$ where
$$
x'= T(x) + \omega y
$$
$$
y'=2y \ \text{ mod}1,
$$
where $S$ denotes the unit circle, and $T:[0,1]\to[0,1]$ is given by

\begin{itemize}
\item for $x\in[0,a), \ T(x)= (\frac{1}{2a}-3)x+a$;
\item for $x\in [a, \frac14), \ T(x)= -2x +(a+\frac12)$;
\item for $x\in [\frac14, \frac12+a), \ T(x)= 2x+(a-\frac12)$;
\item for $x\in [\frac12+a, \frac34), T(x)= -2x+(2+a)$;
\item for $x\in [\frac34, 1-a), T(x)= 2x+ (a-1)$;
\item for $x\in [1-a, 1], T(x)= (3-\frac{1}{2a})x+(-\frac52+\frac{1}{2a}+2a)$.
\end{itemize}
The graph of $T$ is depicted in Fig.2  \\

We considered a piecewise linear map to simply the exposition. However, this is not really needed to apply our results. In fact, a map with nonlinear branches can be used in the example as long as we keep uniform dilatation, bounded distortion, and a $C^{1+}$ smoothness.   For each $\omega$ we have a different random map and we compose them by taking $\omega$ uniformly distributed, for instance, between  $(-\epsilon, \epsilon)$. In this case $\theta_\epsilon(\omega)=\frac{1}{2\epsilon}d\omega$.  The positive parameter $a$ can be chosen equal to $\frac{1}{10}$, in such a way that the image of the unit interval remains in $[0,1]$ when $\epsilon <a$. It is very easy to check that the unperturbed map $\Phi_{0}$ has two ergodic components which are, respectively, subsets of $[a, \frac12-a]\times S$ and $[\frac12+a, 1-a]\times S$ and the latter is a least element. These ergodic components are with respect to absolutely continuous invariant measures whose densities are the fixed points of the Perron-Frobenius operator associated to $\Phi_0$.  The existence of such fixed points follow easily by obtaining a Lasota-Yorke inequality on a suitable function space, such as the space of functions of bounded variation \cite{BG} or  quasi-H\"older functions, \cite{S,HV}, which satisfy all the assumptions required in this paper. Notice that a Lasota-Yorke inequality can be obtained as well for the random Perron-Frobenius operator associated to the random system, by using the closeness of the perturbed maps $\Phi_{\omega}, \ |\omega|\le \epsilon$ for small $\epsilon$ (this means that the constants $\eta$ and $D$ in (LY) and (RLY) can be chosen to be the same for the unperturbed and the perturbed systems\footnote{For these kind of uniformly expanding maps those factors are basically related to the multiplicity of the intersection of the discontinuous lines, which is $2$ in this example, and to the norm and the determinant of the Jacobian matrix of $\Phi_{\omega}$, which is $D\Phi_{\omega}=\begin{pmatrix}
T'(x)&\omega\\
0&2 \\
\end{pmatrix}$, where $|T'|\in\{|3-\frac{1}{2a}|, 2\}$. The determinant does not depend on the noise and (any) norm of the matrix can be chosen uniformly bounded for $\epsilon$ small enough. }). According to our main theorem, there must be only one ergodic absolutely continuous stationary measure and this must be supported in a neighborhood of the least element. In Fig. 4 we show the limit set of several random orbits taken with
$\epsilon=1/120$ and $a=0.1$.  All these random orbits accumulate in the right hand side of $[0,1]\times S$ as predicted by our theory.

\end{example}
\bibliographystyle{amsplain}

\end{document}